\DeclareMathOperator{\Gal}{Gal}
\DeclareMathOperator{\id}{id}
\DeclareMathOperator{\Ker}{Ker}
\DeclareMathOperator{\Fil}{Fil} 
\DeclareMathOperator{\Ind}{Ind} 
\newtheorem{thm}{Theorem}[section]
\newtheorem{prop}[thm]{Proposition}
\newtheorem{lem}[thm]{Lemma}
\newtheorem{rem}[thm]{Remark}
\begin{document}

\title
[Filtered modules]%
{Filtered modules corresponding to\\
potentially semi-stable representations}
\author{Naoki Imai}
\address{Research Institute for Mathematical Sciences, 
Kyoto University, Kyoto 606-8502 Japan}
\email{naoki@kurims.kyoto-u.ac.jp}


\maketitle

\begin{abstract}
We classify the filtered modules with coefficients 
corresponding to 
two-dimensional potentially semi-stable
$p$-adic representations 
of the absolute Galois groups of $p$-adic fields 
under the assumptions that 
$p$ is odd and the coefficients are large enough. 
\end{abstract}

\section*{Introduction}

Let $p$ be an odd prime number, 
and let $K$ be a $p$-adic field. 
The absolute Galois group of $K$ is 
denoted by $G_K$. 
By the fundamental theorem of Colmez and Fontaine \cite{CF}, 
there exists a correspondence between 
potentially semi-stable $p$-adic representations 
and admissible filtered $(\phi ,N)$-modules 
with Galois action. 
The aim of this paper is the 
classification of the admissible filtered 
$(\phi ,N)$-modules with Galois action 
corresponding to 
two-dimensional potentially semi-stable 
$p$-adic representations 
of $G_K$ with coefficients in a $p$-adic field $E$. 

If $K=\mathbb{Q}_p$ and 
$E= \mathbb{Q}_p$, 
the classification is given in 
\cite[Appendix A]{FM} under the assumption 
that $p \geq 5$. 
If $K=\mathbb{Q}_p$ and 
$E$ is general, 
these filtered $(\phi ,N)$-modules 
are studied in \cite{BM} and \cite{Sav}, 
and the classification is given by 
Ghate and M\'{e}zard in \cite{GM} 
under the assumptions that 
$p$ is odd and $E$ is large enough. 
In this paper, 
we generalize the results of \cite{GM} 
to the case where $K$ is a general $p$-adic field. 

In the case where $K$ is a general $p$-adic field, 
filtrations are determined by many weights 
and many elements of $\mathbb{P}^1 (E)$. 
In fact we need $[K:\mathbb{Q}_p ]$ elemens 
of $\mathbb{P}^1 (E)$ to parametrize 
two-dimensional potentially semi-stable
$p$-adic representations. 
These elements of $\mathbb{P}^1 (E)$ 
play a role similar to Fontaine-Mazur's 
$\mathfrak{L}$-invariants. 

After writing of this paper, 
the author has known that there is preceding research \cite{Do} 
on this subject by Dousmanis. 
The author does not claim priority, 
but there are some differences. 
In \cite{Do}, a classification is 
given by Frobenius action, 
and in this paper, 
we give a classification by Galois action. 
Let $F$ be a finite extension of $K$. 
A potentially semi-stable representation $\rho$ 
is said to be $F$-semi-stable, 
if the restriction of $\rho$ to 
the absolute Galois group of $F$ is semi-stable. 
In \cite{Do}, a classification of $F$-semi-stable 
representations is given 
for a general finite Galois extension $F$ of $K$. 
In this paper, 
we give a class of finite Galois extensions of $K$ 
such that any potentially semi-stable representation 
is $F$-semi-stable for a field $F$ in this class, 
and give a classification of $F$-semi-stable 
representations and 
a more explicit description of Galois action 
of $\Gal (F/K)$ for $F$ in this class, 
assuming $p \neq 2$. 
This difference is conspicuous in the supercuspidal case. 
Let $F_0$ be the maximal unramified extension 
of $\mathbb{Q}_p$ contained in $F$. 
In \cite[5.3]{Do}, 
it is proved that $\Gal (F/K)$-action on 
a filtered $(\phi.N)$-$(F_0 \otimes _{\mathbb{Q}_p} E)$-module 
comes from a $\Gal (F/K)$-action on 
the two-dimensional $E$-vector space 
in the supercuspidal case.  
In this paper, 
we study the $\Gal (F/K)$-action explicitly 
by using a structure of $\Gal (F/K)$, 
of coure, assumeing $F$ is in some class. 
Then, in this paper, we first fix a large enough 
coefficient field, and do not extend it 
in the classification. 

This paper is clearly influenced by the paper \cite{GM}, 
and we owe a lot of arguments to \cite{GM}. 
We mention it here, and do not repeat it 
each times in the sequel. 

\subsection*{Acknowledgment}
The author is supported by the Research Fellowships 
of the Japan Society for the Promotion 
of Science for Young Scientists. 
He would like to thank Gerasimos Dousmanis 
for permitting this paper. 
He is grateful to a referee for a careful reading of this paper 
and suggestions for improvements. 
\subsection*{Notation}

Throughout this paper, we use the following notation.
Let $p$ be an odd prime number, 
and $\mathbb{C}_p$ be the $p$-adic completion 
of the algebraic closure of $\mathbb{Q}_p$. 
Let $K$ be a $p$-adic field. 
We consider $K$ as a subfield of $\mathbb{C}_p$. 
The residue field of $K$ is 
denoted by $k$, 
whose cardinality is $q$. 
Let $K_0$ be the maximal unramified 
extension of $\mathbb{Q}_p$
contained in $K$. 
For any $p$-adic field $L$, 
the absolute Galois group of $L$ is 
denoted by $G_L$, 
the inertia subgroup of $G_L$ is 
denoted by $I_L$,  
the Weil group of $L$ is 
denoted by $W_L$, 
the ring of integers of $L$ 
is denoted by $\mathcal{O}_L$ and 
the unique maximal ideal of 
$\mathcal{O}_L$ is 
denoted by $\mathfrak{p}_L$. 
For a Galois extension $L$ of $K$, 
the inertia subgroup of $\Gal (L/K)$ is 
denoted by $I (L/K)$. 
Let $v_p$ be the valuations of 
$p$-adic fields normalized by $v_p (p)=1$. 

\section{Filtered $(\phi ,N)$-modules}

Let $E$ be a $p$-adic field. 
We consider a two-dimensional $p$-adic 
representation $V$ of $G_K$ over $E$, 
which is denoted by $\rho :G_K \to GL (V)$. 
As in \cite{Fon}, 
we can construct $K_0$-algebra $B_{\mathrm{st}}$ 
with a Frobenius endomorphism, a monodromy operator 
and Galois action. 
Further, we can define a decreasing filtration 
on $K \otimes_{K_0} B_{\mathrm{st}}$. 
Let $F$ be a finite Galois extension of $K$, 
and $F_0$ be the maximal unramified extension 
of $\mathbb{Q}_p$ contained in $F$. 
Then we have $B_{\mathrm{st}} ^{G_F} =F_0$. 
The $p$-adic representation 
$\rho$ is called 
$F$-semi-stable if and only if 
the dimension of 
$D_{\mathrm{st},F} (V)=(B_{\mathrm{st}} \otimes_{\mathbb{Q}_p} V)^{G_F}$ 
over $F_0$ is equal to the dimension of $V$ over $\mathbb{Q}_p$. 
If $\rho$ is $F$-semi-stable for 
some finite Galois extension $F$ of $K$, 
we say that $\rho$ is potentially semi-stable representation. 

Potentially semi-stable representations are 
Hodge-Tate. 
To fix a convention, we recall the definition 
of the Hodge-Tate weights. 
For $i \in \mathbb{Z}$, we put 
\[
 D_{\mathrm{HT}} ^i (V) =
 \bigl( \mathbb{C}_p (i) \otimes _{\mathbb{Q}_p } 
 V \bigr) ^{G_K } .
\]
Here and in the following, 
$(i)$ means $i$ times twists by 
the $p$-adic cyclotomic character of $G_{K}$. 
Then there is a $G_K$-equivariant isomorphism 
\[
 \bigoplus_{i \in \mathbb{Z}} 
 \mathbb{C}_p (-i) \otimes _K 
 D_{\mathrm{HT}} ^i (V) 
 \xrightarrow{\sim} 
 \mathbb{C}_p \otimes _{\mathbb{Q}_p} V
\]
of $(\mathbb{C}_p \otimes _{\mathbb{Q}_p} E)$-modules. 
The Hodge-Tate weights of the representation $V$ are 
the integers $i$ such that 
$D_{\mathrm{HT}} ^{-i} (V) \neq 0$, 
with multiplicities 
$\dim_E \bigl( D_{\mathrm{HT}} ^{-i} (V)\bigr)$. 

Next, we recall the definition of the filtered 
$\bigl( \phi ,N ,\Gal (F/K) ,E\bigr)$-modules. 
A filtered 
$\bigl( \phi ,N ,\Gal (F/K) ,E\bigr)$-module 
is a finite free 
$(F_0 \otimes _{\mathbb{Q}_p} E)$-module $D$ 
endowed with 
\begin{itemize}
 \item the Frobenius endomorphism: 
 an $F_0$-semi-linear, $E$-linear, bijective map $\phi :D \to D$, 
 \item the monodromy operator: 
 an $(F_0 \otimes _{\mathbb{Q}_p} E)$-linear, 
 nilpotent endomorphism $N: D \to D$ that satisfies 
 $N \phi =p\phi N$, 
 \item the Galois action: 
 an $F_0$-semi-linear, $E$-linear action of 
 $\Gal (F/K)$ that commutes with the action of 
 $\phi$ and $N$, 
 \item the filtration: 
 a decreasing filtration $(\Fil^i D_F )_{i \in \mathbb{Z}}$ 
 of $(F \otimes _{\mathbb{Q}_p} E)$-submodules of 
 $D_{F} =F \otimes_{F_0} D$ that are stable 
 under the action of $\Gal (F/K)$ and satisfy 
\[
 \Fil^i D_F =D_F \textrm{ for } i \ll 0 \textrm{ and } 
 \Fil^i D_F =0 \textrm{ for } i \gg 0. 
\]
\end{itemize}

Let $D$ be a filtered 
$\bigl( \phi ,N ,\Gal (F/K),E\bigr)$-module. 
Then, by forgetting the $E$-module structure, 
$D$ is also a filtered 
$\bigl( \phi ,N ,\Gal (F/K),\mathbb{Q}_p \bigr)$-module. 
We put $d=\dim_{F_0} D$. 
Then $\bigwedge _{F_0} ^d D$ is a filtered 
$\bigl( \phi ,N ,\Gal (F/K),\mathbb{Q}_p \bigr)$-module 
of dimension $1$ over $F_0$. 
We put 
\[
 t_{\mathrm{H}} (D) =\max \{i \in \mathbb{Z} \mid 
 \Fil^i (F \otimes_{F_0} {\bigwedge ^d}_{F_0} D) \neq 0 \},\
 t_{\mathrm{N}} (D) = v_p (\lambda)
\]
where $\lambda$ is an element of $F_0 ^{\times}$ 
that satisfies 
$\phi (x)=\lambda x$ 
for a non-zero element $x$ of 
$\bigwedge _{F_0} ^d D$. 
We say that $D$ is admissible 
if it satisfies the following two conditions: 
\begin{itemize}
 \item $t_{\mathrm{H}} (D) =t_{\mathrm{N}} (D)$. 
 \item For any $F_0$-submodule $D'$ of $D$ 
 that is stable by $\phi$ and $N$, 
 we have $t_{\mathrm{H}} (D') \leq t_{\mathrm{N}} (D')$, 
 where $D' _F \subset D_F$ is 
 equipped with the induced filtration. 
\end{itemize}
By \cite[Proposition 3.1.1.5]{BM}, 
we may replace the above second condition by the 
following condition: 
\begin{itemize}
 \item For any 
 $(F_0 \otimes _{\mathbb{Q}_p} E)$-submodule $D'$ of $D$ 
 that is stable by $\phi$ and $N$, 
 we have $t_{\mathrm{H}} (D') \leq t_{\mathrm{N}} (D')$, 
 where $D' _F \subset D_F$ is 
 equipped with the induced filtration. 
\end{itemize}

Let $k_0$ be a non-negative integer. 
By the results of \cite{CF}, 
there is an equivalence of categories between the 
category of two-dimensional $F$-semi-stable representations 
of $G_K$ over $E$ with Hodge-Tate weights in 
$\{0, \ldots ,k_0 \}$ and the category of 
admissible filtered 
$\bigl( \phi ,N ,\Gal (F/K),E\bigr)$-modules of rank $2$ over 
$F_0 \otimes_{\mathbb{Q}_p} E$ such that 
$\Fil^{-k_0 } (D_F )=D_F$ and $\Fil^1 (D_F )=0$. 
This equivalence of categories is given by 
the functor $D_{\mathrm{st},F}$ defined above. 
The aim of this paper is 
the classification of the objects of 
later categories under the assumption 
that $E$ is large enough. 

\section{Preliminaries}

Let $\rho :G_K \to GL (V)$ be 
a two-dimensional 
potentially-semi-stable representation 
over $E$. 
We assume that $\rho$ is $F$-semi-stable, 
and put $D =D_{\mathrm{st},F} (V)$. 
We recall the definition of 
Weil-Deligne representation associated to $\rho$. 
Now we have $W_K /W_F = \Gal (F/K)$. 
Let $m_0$ be the degree of 
the field extension $K_0$ over $\mathbb{Q}_p$. 
We define an $F_0$-linear action of $g \in W_K$ 
on $D$ by 
$(g \mod W_F )\circ \phi ^{-m_0 \alpha (g)}$, 
where the image of $g$ in 
$\Gal (\overline{k} /k)$ is the 
$\alpha (g)$-th power of the $q$-th power Frobenius map. 

We assume that $F_0 \subset E$. 
According to an isomorphism 
\[
 F_0 \otimes_{\mathbb{Q}_p} E 
 \xrightarrow{\sim} 
 \prod_{\sigma_i :F_0 \hookrightarrow E} E; 
 a \otimes b \mapsto \sigma_i (a)b,
\]
we have a decomposition 
\[
 D \xrightarrow{\sim} 
 \prod_{\sigma_i :F_0 \hookrightarrow E} 
 D_i . 
\] 
Here and in the sequel, 
$\sigma_i$ is an embedding determined by the $(-i)$-th power of 
the $p$-th power Frobenius map 
for $1 \leq i \leq [F_0 :\mathbb{Q}_p ]$. 
Then $D_i$, with an induced action of $W_K$ and 
an induced monodromy operator, 
defines a Weil-Deligne representation. 
The isomorphism class of 
this Weil-Deligne representation is 
independent of choice of $F$ and $\sigma_i$ 
(cf. \cite[Lemme 2.2.1.2]{BM}), 
and is, by definition, the Weil-Deligne representation 
$\mathrm{WD} (\rho)$ attached to $\rho$. 

We note that, in the above decomposition of $D$, 
the Frobenius endomorphism $\phi$ 
induce $E$-linear isomorphism 
$\phi :D_i \xrightarrow{\sim} D_{i+1}$. 
Naturally, we consider a suffix $i$ 
modulo $[F_0 :\mathbb{Q}_p ]$, and 
we often use such conventions 
in the sequel. 

A Galois type $\tau$ of degree $2$ is 
an equivalence class of representations 
$\tau :I_K \to GL_2 (\overline{\mathbb{Q}}_p )$ 
with open kernel that extend to representations 
of $W_K$. 
We say that 
an two-dimensional 
potentially semi-stable representation $\rho$ has 
Galois type $\tau$ if 
$\mathrm{WD} (\rho)|_{I_K} \simeq \tau$. 
The potentially semi-stable 
representation $\rho$ is 
$F$-semi-stable if and only if 
$\tau |_{I_{F}}$ is trivial. 

For a group $G$, an element $g \in G$, 
a normal subgroup $H$ of $G$ and 
a character 
$\chi : H\to \overline{\mathbb{Q}}_p ^{\times}$, 
we define a character 
$\chi^g : H\to \overline{\mathbb{Q}}_p ^{\times}$ 
by $\chi^g (h) =\chi (ghg^{-1} )$ for $h \in H$. 

\begin{lem}\label{form}
 Let $\tau$ be a Galois type of degree $2$. 
Then $\tau$ has one of the following forms: 
\begin{enumerate}
 \item 
 $\tau \simeq \chi_1 |_{I_K} \oplus \chi_2 |_{I_K}$, 
 where $\chi_1$, $\chi_2$ are characters of 
 $W_K$ finite on $I_K$, 
 \item 
 $\tau \simeq \Ind_{W_{K'}} ^{W_K} (\chi) |_{I_K}
 =\chi|_{I_K} \oplus \chi^{\sigma} |_{I_K}$, 
 where $K'$ is the unramified quadratic extension of $K$, 
 $\chi$ is a character of $W_{K'}$ 
 that is finite on $I_{K'}$ 
 and does not extend to $W_K$, and 
 $\sigma \in W_K$ is a lift of 
 the generator of $\Gal (K'/K)$, 
 \item 
 $\tau \simeq \Ind_{W_{K'}} ^{W_K} (\chi) |_{I_K}$, 
 where $K'$ is a ramified quadratic extension of $K$, 
 and $\chi$ is a character of $W_{K'}$ 
 such that $\chi$ is finite on $I_{K'}$ 
 and $\chi |_{I_{K'}}$ does not extend to $I_K$. 
\end{enumerate}
\end{lem}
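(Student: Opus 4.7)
My plan is to pick any extension $\tilde\tau : W_K \to GL_2(\overline{\mathbb{Q}}_p)$ of $\tau$, which exists by the definition of a Galois type, and to classify $\tilde\tau$ up to restriction to $I_K$. Since $\tau$ has open kernel, the image $\tilde\tau(I_K)$ is finite, so $\tilde\tau|_{I_K}$ is semisimple. I then split into two cases.

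First, suppose $\tilde\tau|_{I_K}$ is reducible, so $\tau \simeq \chi_1 \oplus \chi_2$ as an $I_K$-representation with $\chi_i$ of finite order. Since $P_K \triangleleft W_K$, the Weil group permutes the isotypic eigenlines by conjugation. If every eigenline is $W_K$-stable, then $\chi_1$ and $\chi_2$ lift to characters of $W_K$ (finite on $I_K$ by construction); this is case (1). Otherwise some element of $W_K$ swaps the two distinct eigenlines, so the stabilizer of one eigenline is an index-$2$ subgroup of $W_K$ containing $I_K$, which must therefore equal $W_{K'}$ for the unramified quadratic extension $K'/K$. Then $\tilde\tau \simeq \Ind_{W_{K'}}^{W_K}(\chi)$ for a character $\chi$ of $W_{K'}$ finite on $I_{K'} = I_K$, which cannot extend to $W_K$ (else $\tilde\tau$ would be reducible on $W_K$); this is case (2).

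Next, suppose $\tilde\tau|_{I_K}$ is irreducible. Here I use $p \neq 2$ crucially. The image $\tilde\tau(P_K)$ is a finite $p$-subgroup of $GL_2(\overline{\mathbb{Q}}_p)$, and for $p$ odd any such is abelian: the finite subgroups of $PGL_2$ are cyclic, dihedral, $A_4$, $S_4$, or $A_5$, and for $p$ odd only the cyclic ones have $p$-power order. Hence $\tilde\tau|_{P_K} \simeq \psi_1 \oplus \psi_2$ is diagonalizable. We must have $\psi_1 \neq \psi_2$: otherwise $P_K$ would act by a scalar character, and combined with the procyclic tame quotient $I_K/P_K$ this would yield a simultaneous diagonalization of $\tilde\tau|_{I_K}$, contradicting irreducibility. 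Consequently the two eigenlines of $\tilde\tau|_{P_K}$ are canonically determined, and $W_K$ permutes them, giving a homomorphism $W_K \to \{\pm 1\}$; irreducibility of $\tilde\tau|_{I_K}$ forces $I_K$ to act nontrivially, so the kernel is $W_{K'}$ for a quadratic extension $K'/K$ that is necessarily ramified. Writing $\tilde\tau \simeq \Ind_{W_{K'}}^{W_K}(\chi)$ for $\chi$ the action of $W_{K'}$ on one eigenline, finite on $I_{K'}$, Mackey (applied with $I_K W_{K'} = W_K$ for $K'/K$ ramified) gives $\tilde\tau|_{I_K} \simeq \Ind_{I_{K'}}^{I_K}(\chi|_{I_{K'}})$, which is irreducible exactly when $\chi|_{I_{K'}}$ does not extend to $I_K$; this is case (3).

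The main obstacle is the irreducible case, where the hypothesis $p \neq 2$ is essential: without it, one could have finite $2$-subgroups of $GL_2$ such as the quaternion group acting irreducibly on wild inertia, spoiling the reduction to the dihedral case. The delicate bookkeeping is then to track the action of $I_K$ on the pair of wild-inertia eigenlines in order to force the quadratic extension to be ramified and to identify the induction datum.
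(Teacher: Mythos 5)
Your proof is correct and follows essentially the same route as the paper's: extend $\tau$ to a representation $\tilde{\tau}$ of $W_K$, settle the reducible case by tracking how $W_K$ permutes the $I_K$-eigenlines (unramified quadratic $K'$), and in the irreducible case use $p \neq 2$ to see that wild inertia acts reducibly, forcing a ramified quadratic $K'$ --- you merely supply details the paper leaves implicit. One typo: in your reducible case the relevant normality is $I_K \triangleleft W_K$, not $P_K \triangleleft W_K$.
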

\begin{proof}
This is a classical lemma, but we briefly recall a proof. 

We extend $\tau$ to a representation of $W_K$, 
which is denoted by $\tilde{\tau}$. 
If $\tilde{\tau}$ is reducible, we are in the case $(1)$, 
so we may assume that $\tilde{\tau}$ is irreducible. 

First, we treat the case where $\tau$ is reducible. 
In this case, 
$\tau \simeq \chi \oplus \chi'$ for 
some characters $\chi, \chi'$ of $I_K$. 
By irreducibility of $\tilde{\tau}$, 
we have $\chi' =\chi^{\sigma}$. 
Then $\tilde{\tau}|_{W_{K'}}$ is already reducible 
for the unramified quadratic extension $K'$ of $K$. 
So we are in the case $(2)$. 

Next, we treat the case where $\tau$ is irreducible. 
Let $I_K ^{\mathrm{w}}$ be the wild inertia subgroup of $I_K$. 
Then $\tau|_{I_K ^{\mathrm{w}}}$ is reducible, 
because a dimension of an irreducible representation of 
a $p$-group is a power of $p$ and $p \neq 2$. 
Then $\tilde{\tau}|_{W_{K'}}$ is already reducible 
for a ramified quadratic extension $K'$ of $K$. 
So we are in the case $(3)$. 
\end{proof}

To avoid the problem of the rationality, 
we assume that 
$E$ is a Galois extension over $\mathbb{Q}_p$, 
$F \subset E$ and the following: 

\begin{quote}
For all $p$-adic fields $K'$ such that 
$K \subset K' \subset F$ and $[K' :K] \leq 2$, 
and for all characters $\chi$ of $W_{K'}$ 
that are trivial on $I_{F}$, 
the restrictions $\chi|_{I_{K'}}$ 
factor through $E^{\times}$. 
\end{quote}
For example, 
if $E$ contains the $|I(F/K)|$-th roots of unity, 
then this condition is satisfied. 

In the sequel, let 
$\rho :G_K \to GL(V)$ be a two-dimensional 
potentially semi-stable representation 
over $E$ with 
Hodge-Tate weight in $\{0,\ldots ,k_0 \}$, 
and $\tau$ be its Galois type. 

\begin{lem}(cf. \cite[Lemma 2.3]{GM})
If $\rho$ is not potentially crystalline, 
then $\tau$ is a scalar. 
\end{lem}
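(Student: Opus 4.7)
The plan is to exploit the standard commutation relation between the monodromy operator and the Weil-group action in a Weil--Deligne representation, together with the fact that this relation becomes trivial on the inertia subgroup.

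First I would note that the hypothesis ``not potentially crystalline'' means that $N$ does not vanish on $D = D_{\mathrm{st},F}(V)$, hence that the induced monodromy operator on the Weil--Deligne representation $\mathrm{WD}(\rho)$ is non-zero. Since $\mathrm{WD}(\rho)$ is two-dimensional over $E$ and $N$ is nilpotent, one has $N^2 = 0$ and $\ker N$ is a one-dimensional $E$-subspace. Unwinding the definition of the $W_K$-action on $D$ via $(g \bmod W_F) \circ \phi^{-m_0 \alpha(g)}$ and combining it with $N\phi = p\phi N$, one obtains the standard Weil--Deligne relation
\[
r(g) \, N \, r(g)^{-1} = \|g\| \, N,
\]
where $\|\cdot\|$ is the unramified character of $W_K$ sending a geometric Frobenius lift to $q^{-1}$. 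This relation immediately shows that $\ker N$ is stable under $r(W_K)$: if $Nv=0$, then $N r(g)^{-1} v = \|g\|^{-1} r(g)^{-1} N v = 0$.

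Next I would let $\chi_1$ be the character of $W_K$ acting on $\ker N$, and $\chi_2$ the character on the quotient $\mathrm{WD}(\rho)/\ker N$. Choosing $v \notin \ker N$ and comparing
\[
r(g) Nv = \|g\| N r(g) v = \|g\| \chi_2(g) Nv
\]
with the fact that $r(g)$ acts on $Nv \in \ker N$ by $\chi_1(g)$, I would conclude $\chi_1 = \|\cdot\| \chi_2$ as characters of $W_K$. Restricting to $I_K$ kills $\|\cdot\|$, so $\chi_1|_{I_K} = \chi_2|_{I_K}$; call this common inertial character $\chi$.

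Finally, since $\tau$ has open kernel by the definition of Galois type, $\tau|_{I_K}$ factors through a finite group and is therefore semisimple; its two Jordan--H\"older constituents are both $\chi$, so $\tau \simeq \chi \oplus \chi$, which is a scalar representation. The only delicate point I anticipate is verifying that the Weil--Deligne commutation relation holds with the correct exponent in the normalization fixed in Section~2 of the paper; this is a routine check tracing through the definition of the $W_K$-action on $D$ via the Frobenius twist, but must be done carefully to be sure the unramified character appearing is indeed trivial on $I_K$.
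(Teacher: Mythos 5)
Your argument is correct and is exactly the standard proof that the paper delegates to the citation of \cite[Lemma~2.3]{GM}: on $I_K$ the twist by $\phi^{-m_0\alpha(g)}$ disappears, so $N$ is a nonzero $I_K$-equivariant nilpotent endomorphism identifying the quotient by $\ker N$ with $\ker N$, forcing the two inertial characters to coincide, and semisimplicity of a representation of the finite group $I_K/I_F$ then gives $\tau\simeq\chi\oplus\chi$. The normalization of the unramified character you flag is indeed immaterial here, precisely because it is trivial on $I_K$.
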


Therefore, 
there are following three possibilities: 
\begin{itemize}
 \item Special or Steinberg case: 
 $N\neq 0$ and $\tau$ is a scalar. 
 \item Principal series case: 
 $N=0$ and $\tau$ is as in $(1)$ of 
 Lemma \ref{form}. 
 \item Supercuspidal case: 
 $N=0$ and $\tau$ is as in $(2)$ or $(3)$ 
 of Lemma \ref{form}.
\end{itemize}

Next, we study the structure of the filtrations. 
We assume $\rho$ is $F$-semi-stable, 
and take the corresponding filtered 
$\bigl( \phi ,N,\Gal (F/K),E\bigr)$-module $D$. 
We have a decomposition
\[
 F \otimes_{\mathbb{Q}_p} E \xrightarrow{\sim} 
 \prod_{j_F :F \hookrightarrow E} E = 
 \prod_{j:K \hookrightarrow E} 
 \Biggl( \prod_{j_F :F\hookrightarrow E,\,
 j_F |_K =j} E \Biggr) =
 \prod_{j:K \hookrightarrow E} E_j , 
\]
where $j_F$ and $j$ are $\mathbb{Q}_p$-embeddings 
and we put 
\[
 E_j =\prod_{j_F :F\hookrightarrow E,\, j_F |_K =j} E. 
\]
According to the above decomposition, 
we have decompositions
\[
 D_F \cong \prod_{j:K \hookrightarrow E} 
 D_{F,j} \textrm{ and }
 \Fil^i D_F \cong \prod_{j:K \hookrightarrow E} 
 \Fil_j ^i D_F .
\]
Because $\Fil^i D_F$ is $\Gal (F/K)$-stable, 
$\Fil_j ^i D_F$ is free over $E_j$. 
We take integers 
$0 \leq k_{j,1} \leq k_{j,2} \leq k_0$ 
such that 
\[
 D_{F,j} = \Fil_j ^{-k_{j,2}} D_F 
 \supsetneq 
 \Fil_j ^{1-k_{j,2}} D_F = \Fil_j ^{-k_{j,1}} D_F 
 \supsetneq 
 \Fil_j ^{1-k_{j,1}} D_F =0. 
\]
Then the Hodge-Tate weights of $\rho$ are 
$\bigcup_{j :K \hookrightarrow E} \{k_{j,1} ,k_{j,2} \}$. 

We are going to prepare some lemmas. 
\begin{lem}\label{isom}
There is a $\Gal (F/K)$-equivariant isomorphism 
\[
 F \otimes_{K} E 
 \xrightarrow{\sim} E_j
\]
of $E$-algebra. 
\end{lem}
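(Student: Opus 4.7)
The plan is to write down the standard isomorphism provided by Galois theory and to verify it intertwines the two Galois actions. Since $E$ is Galois over $\mathbb{Q}_p$ and contains $F$, every $K$-embedding $j: K \hookrightarrow E$ admits exactly $[F:K]$ extensions to embeddings $j_F: F \hookrightarrow E$, so both $F \otimes_K E$ and $E_j$ are free $E$-modules of rank $[F:K]$.

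First I would define $\Phi: F \otimes_K E \to E_j$ by $a \otimes b \mapsto (j_F(a) b)_{j_F}$, where on the source $E$ is regarded as a $K$-algebra via $j$ and on the target $j_F$ runs over the extensions of $j$. This is manifestly an $E$-algebra homomorphism. To prove bijectivity I would apply the primitive element theorem, writing $F = K(\alpha)$ with minimal polynomial $f(x) \in K[x]$, so that $F \otimes_K E \cong E[x]/(j(f)(x))$; since $j(f)$ splits over $E$ with distinct roots $\{j_F(\alpha)\}_{j_F}$ by separability of $F/K$, the Chinese remainder theorem yields the desired isomorphism with $\prod_{j_F} E = E_j$.

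For the $\Gal(F/K)$-equivariance, the natural action on the source sends $a \otimes b$ to $g(a) \otimes b$, which $\Phi$ carries to $\bigl((j_F \circ g)(a) \cdot b\bigr)_{j_F}$. Hence, defining the $\Gal(F/K)$-action on $E_j$ by $(g \cdot x)_{j_F} = x_{j_F \circ g}$---a permutation of factors via precomposition with $g$---makes $\Phi$ equivariant by construction. One should also check that this action agrees with the Galois action on $E_j$ inherited from the decomposition $F \otimes_{\mathbb{Q}_p} E \cong \prod_j E_j$ in which $\Gal(F/K)$ acts on the first tensor factor; this is immediate because $\Gal(F/K)$ fixes $K$ and thus preserves each piece indexed by $j$.

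The argument is really a bookkeeping exercise in Galois theory, and I do not anticipate any genuine obstacle. The one point requiring care is the direction of the permutation (precomposition with $g$ versus $g^{-1}$), which must be set up consistently for $\Phi$ to intertwine the two Galois actions throughout subsequent arguments in the paper.
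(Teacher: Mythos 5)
Your map and the verification of equivariance are correct (including the direction of the permutation, $(g\cdot x)_{j_F}=x_{j_F\circ g}$), but note the one point where your setup quietly diverges from the statement. Your formula $\Phi(a\otimes b)=(j_F(a)b)_{j_F}$ is $K$-balanced only if $E$ is viewed as a $K$-algebra through $j$ on the source, as you stipulate: with the natural structure $K\subset E$ one would need $j_F(\lambda)=\lambda$ for all $\lambda\in K$, which fails whenever $j$ is not the inclusion. The paper keeps the natural structure, identifies $F\otimes_K E$ with $E_{j_0}$ ($j_0$ the inclusion) by exactly your CRT map, and then passes from $E_{j_0}$ to $E_j$ by relabelling factors $j_F\mapsto j_E\circ j_F$, where $j_E\in\Aut(E)$ is an extension of $j$ to an automorphism of $E$ --- this is precisely where the standing hypothesis that $E/\mathbb{Q}_p$ is Galois enters, and your argument sidesteps it. Since any two products of copies of $E$ indexed by simply transitive $\Gal(F/K)$-sets are equivariantly isomorphic as $E$-algebras, your version is adequate for every application of the lemma (Lemmas \ref{inv} and \ref{basis} use only the abstract induced-module structure), but to prove the statement with the natural $K$-algebra structure on $E$ you should add the twist by $j_E$.
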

\begin{proof}
Let $j_0$ be a natural inclusion $K\subset E$. 
Take an extension 
$j_E :E \xrightarrow{\sim} E$ 
of $j:K \hookrightarrow E$. 
Then a $\Gal (F/K)$-equivariant isomorphism 
\[
 \prod_{j_F :F\hookrightarrow E,\,
 j_F |_K =j_0 } E \xrightarrow{\sim} 
 \prod_{j_F :F\hookrightarrow E,\,
 j_F |_K =j} E 
\]
of $E$-algebra is given by sending 
$j_F$-components to 
$(j_E \circ j_F)$-components. 
\end{proof}

\begin{lem}\label{inv} 
If $k_{j,1} < k_{j,2}$, then 
$\Fil_j ^{-k_{j,1}} D_F \subset D_{F,j}$ is 
spanned by a Galois invariant element over $E_j$. 
\end{lem}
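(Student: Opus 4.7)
The plan is to apply Galois descent to $\Fil_j^{-k_{j,1}} D_F$ viewed as a module over $E_j$, after transporting the structure via Lemma \ref{isom}.

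First I would verify the rank. The chain
\[
 D_{F,j} = \Fil_j^{-k_{j,2}} D_F \supsetneq \Fil_j^{-k_{j,1}} D_F \supsetneq 0,
\]
together with the fact (recorded just before the lemma) that each graded piece of the filtration is free over $E_j$, forces $\Fil_j^{-k_{j,1}} D_F$ to be free of rank one over $E_j$ under the hypothesis $k_{j,1} < k_{j,2}$. So the content of the lemma is that this rank one $E_j$-module admits a Galois-fixed generator.

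Next I would transport the structure via Lemma \ref{isom}, identifying $E_j$ with $F \otimes_K E$ as $\Gal(F/K)$-equivariant $E$-algebras, where $\Gal(F/K)$ acts only on the $F$-factor. The action of $\Gal(F/K)$ on $D$ is $F_0$-semi-linear and $E$-linear, so its extension to $D_F = F \otimes_{F_0} D$ is $F$-semi-linear and $E$-linear, and the filtration is $\Gal(F/K)$-stable by assumption. Hence $\Fil_j^{-k_{j,1}} D_F$ is a rank one free $(F \otimes_K E)$-module equipped with a semi-linear action of $\Gal(F/K)$ that is trivial on the $E$-factor.

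At this point I would invoke the module-valued form of Hilbert 90: for any finite free $(F \otimes_K E)$-module $M$ endowed with an $F$-semi-linear, $E$-linear action of $\Gal(F/K)$, the submodule of invariants $M^{\Gal(F/K)}$ is a free $E$-module of the same rank, and the natural map
\[
 (F \otimes_K E) \otimes_E M^{\Gal(F/K)} \longrightarrow M
\]
is an isomorphism. Applied to $M = \Fil_j^{-k_{j,1}} D_F$, this yields a rank one free $E$-module of invariants, any $E$-generator of which is a Galois-invariant element spanning $\Fil_j^{-k_{j,1}} D_F$ over $E_j$.

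The argument is essentially formal once Lemma \ref{isom} is in place, and the main (mild) obstacle is simply to confirm that the identification $E_j \cong F \otimes_K E$ matches the permutation action of $\Gal(F/K)$ on the factors of $E_j$ with its natural action on the $F$-factor of $F \otimes_K E$; granting this, the descent statement is standard, since $F \otimes_K E$ is a finite étale $E$-algebra with Galois group $\Gal(F/K)$ and fixed subring $E$.
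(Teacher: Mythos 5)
Your argument is correct and is essentially the paper's: the paper also reduces via Lemma \ref{isom} to the triviality of the rank-one descent problem over $E_j \cong F\otimes_K E$, phrasing it as the vanishing of $H^1\bigl(\Gal(F/K), E_j^{\times}\bigr)$, which it gets from Shapiro's lemma since $E_j^{\times}$ is an induced module. Your appeal to ``Hilbert 90 for modules over the \'etale algebra $F\otimes_K E$'' is the same cohomological fact in descent-theoretic clothing.
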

\begin{proof}
A generator of $\Fil_j ^{-k_{j,1}} D_F$ over $E_j$ 
generates an $E_j ^{\times}$-torsor with $\Gal (F/K)$-action. 
An $E_j ^{\times}$-torsor with $\Gal (F/K)$-action is tirivial, 
if $H^1 \bigl( \Gal (F/K), E_j ^{\times} \bigr) =0$. 
So it suffices to show that 
$H^1 \bigl( \Gal (F/K), E_j ^{\times} \bigr) =0$. 
By Lemma \ref{isom}, 
$E_j ^{\times}$ is isomorphic to 
$(F \otimes_{K} E)^{\times}$, 
and it is further isomorphic to 
$\Ind_{\{\id_F\}} ^{\Gal (F/K)} E^{\times}$. 
By Shapiro's lemma, 
$H^1 \bigl( \Gal (F/K),
 \Ind_{\{\id_F\}} ^{\Gal (F/K)} E^{\times} \bigr) 
 =H^1 (\{\id_F \} ,E^{\times} )=0$. 
\end{proof}

\begin{lem}\label{basis}
Let $K'$, $M$ be $p$-adic fields such that 
$K \subset K' \subset M \subset F$ and 
$M$ is a Galois extension of $K'$. 
Let $\chi :\Gal (M/K') \to E^{\times}$ be 
a character. 
We put $m=[K' :K]$. 
Then there exist 
$x_1 ,\ldots ,x_m \in M \otimes_K E$ 
that satisfy the followings: 
\begin{itemize}
 \item 
 For $x \in M \otimes_K E$, 
 we have $gx = \bigl( 1\otimes \chi(g)^{-1} \bigr) x$ 
 for all $g \in \Gal (M/K')$ if and only if 
 $x=\sum_{i=1} ^m (1\otimes a_i )x_i$ for $a_i \in E$. 
 \item 
 For $a_i \in E$, we have 
 $\sum_{i=1} ^m (1\otimes a_i )x_i \in (M \otimes_K E)^{\times}$ 
 if and only if $a_i \neq 0$ for all $i$. 
\end{itemize}
\end{lem}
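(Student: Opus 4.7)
The plan is to decompose $M \otimes_K E$ as a product of copies of $E$ indexed by the $K$-embeddings of $M$ into $E$, identify the $\Gal (M/K')$-action as a permutation of the factors, and take the $x_i$ to be the $\chi^{-1}$-eigenvectors supported on each orbit.

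First, since $E$ is Galois over $\mathbb{Q}_p$ and contains $M$, I would record the $\Gal (M/K')$-equivariant isomorphism of $E$-algebras
\[
M \otimes_K E \xrightarrow{\sim} \prod_{j_M : M \hookrightarrow E} E,
\]
where the product is over $K$-embeddings $j_M$ extending the natural inclusion $K \subset E$, the map sends $m \otimes e$ to $\bigl( j_M (m) e \bigr)_{j_M}$, and $g \in \Gal (M/K')$ acts on a function $f$ on the index set by $(g \cdot f)(j_M) = f(j_M \circ g)$. Under this identification, the condition $g x = (1 \otimes \chi(g)^{-1} ) x$ becomes $f(j_M \circ g) = \chi(g)^{-1} f(j_M)$ for all $g$ and all $j_M$.

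Next I would describe the orbit structure. Because $M/K'$ is Galois, two $K$-embeddings $j_M , j_M' : M \hookrightarrow E$ lie in the same $\Gal (M/K')$-orbit if and only if their restrictions to $K'$ coincide, so the orbits are in bijection with the $K$-embeddings $K' \hookrightarrow E$. This gives exactly $m = [K' : K]$ orbits $\mathcal{O}_1 , \ldots , \mathcal{O}_m$, and after fixing a representative $j_{M,i} \in \mathcal{O}_i$ the map $g \mapsto j_{M,i} \circ g$ is a bijection $\Gal (M/K') \xrightarrow{\sim} \mathcal{O}_i$. I then define $x_i$ to correspond to the function that sends $j_{M,i} \circ g$ to $\chi(g)^{-1}$ and vanishes on the other orbits; the eigenvector equation shows both that each $x_i$ is a $\chi^{-1}$-eigenvector and that any $\chi^{-1}$-eigenvector is determined freely by its value at one representative per orbit, which gives the first assertion.

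For the second assertion, an element of $\prod_{j_M} E$ is a unit if and only if every component is nonzero. The $j_M$-component of $\sum_{i=1}^{m} (1 \otimes a_i) x_i$ at $j_M = j_{M,i} \circ g \in \mathcal{O}_i$ equals $a_i \chi(g)^{-1}$, and since $\chi$ lands in $E^{\times}$ this is nonzero exactly when $a_i \neq 0$. The only real obstacle is keeping track of the orbit bookkeeping and the direction of the action; once the bijection between orbits and $K$-embeddings of $K'$ is in place, both claims follow immediately from the description of the eigenspace as the space freely generated by the $\chi^{-1}$-twisted indicator functions of the orbits.
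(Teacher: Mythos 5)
Your proof is correct, and it takes a genuinely different (and arguably cleaner) route than the paper. The paper also begins with the decomposition $M\otimes_K E \cong \prod_{j_M} E$, but it regroups the factors into blocks $E_{j'}$ indexed by $j':K'\hookrightarrow E$, uses a Lemma~\ref{isom}--type identification $E_{j'}\cong M\otimes_{K'}E$ to reduce to the case $m=1$, and then invokes the normal basis theorem: choosing $\alpha$ with $\{g(\alpha)\}_{g\in\Gal(M/K)}$ a $K$-basis of $M$, the eigenvector condition forces $x=(1\otimes a_1)\sum_g g(\alpha)\otimes\chi(g)$, whence $x_1=\sum_g g(\alpha)\otimes\chi(g)$. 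You instead stay with the fully split algebra $\prod_{j_M}E$, identify the $\Gal(M/K')$-orbits on the index set (using that $M/K'$ is Galois to see they are parametrized by $j':K'\hookrightarrow E$, hence exactly $m$ of them), and take $x_i$ to be the $\chi^{-1}$-twisted indicator of the $i$-th orbit. What your approach buys is that both assertions become immediate: the eigenspace is visibly free on the orbit representatives, and since every nonzero component of $x_i$ is a value of $\chi^{-1}$, it is automatically a unit, so $\sum(1\otimes a_i)x_i$ is a unit exactly when all $a_i\neq 0$. In the paper's normal-basis version, this last point --- that the Lagrange-resolvent element $x_1$ actually lies in $(M\otimes_K E)^\times$ --- is left implicit and requires a short extra observation (e.g.\ that the $j_M$-components of $x_1$ all differ by units $\chi(h)^{-1}$, so either all vanish or none do, and not all vanish since $\{g(\alpha)\}$ is a basis). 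Your argument bypasses that entirely.
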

\begin{proof}
We have a decomposition
\[
 M \otimes_K E \xrightarrow{\sim} 
 \prod_{j_M :M \hookrightarrow E} E = 
 \prod_{j' :K' \hookrightarrow E} 
 \Biggl( \prod_{j_M :M\hookrightarrow E,\,
 j_M |_{K'} =j'} E \Biggr) =
 \prod_{j' :K' \hookrightarrow E} E_{j'} , 
\]
where $j_M$ and $j'$ are $K$-embeddings and we put 
\[
 E_{j'} =\prod_{j_M :M\hookrightarrow E,\, j_M |_{K'} =j'} E. 
\]
Let $(x_{j'} )_{j'} \in \prod_{j' :K' \hookrightarrow E} E_{j'}$ 
be the image of $x$ 
under the above isomorphism. 
Then, 
$gx = \bigl( 1\otimes \chi(g)^{-1} \bigr) x$ 
for all $g \in \Gal (M/K')$ if and only if 
$gx_{j'} = \chi(g)^{-1} x_{j'}$ 
for all $g \in \Gal (M/K')$ and all 
$j':K' \hookrightarrow E$. 
Further, 
$x \in (M \otimes_K E)^{\times}$ if and only if 
$x_{j'} \in E_{j'} ^{\times}$ 
for all $j'$. 
As in the proof of Lemma \ref{isom}, 
we can show there is a $\Gal (M/K')$-equivariant 
isomorphism 
$M \otimes_{K'} E \xrightarrow{\sim} E_{j'}$ 
of $E$-algebra. 
So, to prove this Lemma, 
it suffices to treat the case where $m=1$. 

We assume that $m=1$. 
Take $\alpha \in M$ such that 
$g(\alpha)$ for $g \in \Gal (M/K)$ form 
a basis of $M$ over $K$. 
Then $x \in M \otimes_K E$ can be 
written uniquely as 
\[
 \sum_{g \in \Gal (M/K)}  g(\alpha) \otimes a_g 
\]
for $a_g \in E$. 
If $hx = \bigl( 1\otimes \chi(h)^{-1} \bigr) x$ 
for all $h \in \Gal (M/K)$, 
we have $a_{i,h^{-1} g} =\chi^{-1} (h)a_g$ 
for all $g,h \in \Gal (M/K)$. 
By putting $a_1 =a_{\id_M}$, we have 
\[
 x= (1 \otimes a_1 ) 
 \sum_{g\in \Gal (M/K)} 
 g(\alpha) \otimes \chi (g) .
\]
It suffices to put 
$x_1 =\sum_{g\in \Gal (M/K)} g(\alpha)\otimes \chi (g)$. 
\end{proof}

\section{Classification}

\subsection{Special or Steinberg case}

In this case, 
$\tau \simeq \chi |_{I_K} \oplus \chi |_{I_K}$ 
for some character $\chi$ of $W_K$ 
that is finite on $I_K$, 
and there exists a totally ramified cyclic 
extension $F$ of $K$ such that 
$\chi |_{I_F}$ is trivial. 
So we may assume that $\rho$ is 
$F$-semi-stable, and 
$\chi$ determine 
the action of $\Gal (F/K)$ on $D$, 
which is again denoted by $\chi$. 

Since $N\phi =p\phi N$, we have that 
$\Ker N$ is $\phi$-stable 
and free of rank $1$ over 
$F_0 \otimes_{\mathbb{Q}_p} E$. 
So we can take a basis $e_1 ,e_2$ of $D$ 
over $F_0 \otimes_{\mathbb{Q}_p} E$ such 
that $N(e_1 )=e_2$ and $N(e_2 )=0$. 
Again by $N\phi =p\phi N$, we must have 
$\phi (e_1 )=\frac{p}{\alpha} e_1 +\gamma e_2$ 
and $\phi (e_2 )=\frac{1}{\alpha} e_2$ with 
$\alpha \in (F_0 \otimes_{\mathbb{Q}_p} E)^{\times}$ 
and $\gamma \in F_0 \otimes_{\mathbb{Q}_p} E$. 
Modifying $e_1$ by a scalar multiple of $e_2$, 
we may assume $\gamma =0$. 
Let 
$(\alpha_i)_i \in 
 \prod_{\sigma_i :F_0 \hookrightarrow E} E$ 
be the image of $\alpha$ under the isomorphism 
\[
 F_0 \otimes_{\mathbb{Q}_p} E \xrightarrow{\sim} 
 \prod_{\sigma_i :F_0 \hookrightarrow E} E. 
\]
Then, by calculations, 
we have 
\begin{align*}
 t_{\mathrm{H}} (D) &=-[E:K]
 \sum_{j:K \hookrightarrow E}(k_{j,1} +k_{j,2} ), \\ 
 t_{\mathrm{N}} (D) &=[E:F_0 ]\biggl( 
 m_0 - 2\sum_i v_p (\alpha_i )\biggl). 
\end{align*}
So the condition 
$t_{\mathrm{H}} (D) = t_{\mathrm{N}} (D)$ is 
equivalent to that 
\[
 2[K:K_0 ]\sum_{i} v_p (\alpha_i) 
 = \sum_{j} (k_{j,1} +k_{j,2} +1). 
\]

For $j:K \hookrightarrow E$ satisfying 
$k_{j,1} < k_{j,2}$, by 
Lemma \ref{inv}, we take 
$a_j ,b_j \in E_j$ such that 
$\Fil_j ^{-k_{j,1}} D_F =E_j (a_j e_1 +b_j e_2)$, 
and $(a_j e_1 +b_j e_2)$ is 
$\Gal (F/K)$-invariant. 
We note that $a_j =0$ or $a_j \in E_j ^{\times}$ 
and that $b_j =0$ or $b_j \in E_j ^{\times}$. 

The only non-trivial 
$(\phi, N)$-stable 
$(F_0 \otimes _{\mathbb{Q}_p} E)$-submodule 
of $D$ is 
$D' _2 =(F_0 \otimes _{\mathbb{Q}_p} E)e_2$. 
By calculations, we have 
\begin{align*}
 t_{\mathrm{H}} (D' _2) &=-[E:K]
 \Biggl\{ \sum_{a_j =0} k_{j,1} + 
 \sum_{a_j \neq 0} k_{j,2} + 
 \sum_{k_{j,1} =k_{j,2} } k_{j,2} \Biggr\}, \\ 
 t_{\mathrm{N}} (D' _2) &=-[E:F_0 ] 
 \sum_i v_p (\alpha_i ). 
\end{align*}
So the condition 
$t_{\mathrm{H}} (D' _2 )\leq t_{\mathrm{N}} (D' _2 )$ 
is equivalent to that 
\[
 [K:K_0 ]\sum_{i} v_p (\alpha_i) 
 \leq 
 \sum_{a_j =0} k_{j,1} + 
 \sum_{a_j \neq 0} k_{j,2} + 
 \sum_{k_{j,1} =k_{j,2} } k_{j,2} . 
\]

Since $(a_j e_1 +b_j e_2 )$ is 
$\Gal (F/K)$-invariant, 
$g \in \Gal (F/K)$ acts on 
$a_j$ and $b_j$ by $\chi (g)^{-1}$. 
By Lemma \ref{isom} and Lemma \ref{basis}, 
there is $x_1 \in E_j$ such that 
$a_j =a_j ' x_1$ and $b_j =b_j ' x_1$ 
for $a_j ' ,b_j ' \in E$. 
Then, for $j$ such that 
$a_j \neq 0$, 
\[
 \Fil_j ^{-k_{j,1}} D_F = 
 E_j (a_j ' x_1 e_1 +b_j ' x_1 e_2 ) = 
 E_j (e_1 -\mathfrak{L}_j e_2 )
\]
for $\mathfrak{L}_j \in E$. 

\begin{prop}
We assume that $N \neq 0$. Then 
$\tau \simeq \chi |_{I_K} \oplus \chi |_{I_K}$ 
for some character $\chi$ of $W_K$ 
that is finite on $I_K$. 
If we take a totally ramified cyclic 
extension $F$ of $K$ such that 
$\chi$ is trivial on $I_F$, then 
$D=(F_0 \otimes_{\mathbb{Q}_p} E)e_1 \oplus 
 (F_0 \otimes_{\mathbb{Q}_p} E)e_2$ with 
\[ 
 N(e_1 )=e_2 ,\ N(e_2 )=0,\ 
 \phi (e_1 )=\frac{p}{\alpha} e_1 ,\ 
 \phi (e_2 )=\frac{1}{\alpha} e_2 
\]
for $\alpha \in (F_0 \otimes_{\mathbb{Q}_p} E)^{\times}$, 
\[
 ge_1 =\chi (g)e_1 ,\ ge_2 =\chi (g)e_2 
\]
for $g \in \Gal (F/K)$ and 
\[
 \Fil_j ^{-k_{j,1} } D_F =
 \begin{cases}
 E_j e_2 & \textrm{if } j \in I_1 , \\
 E_j (e_1 -\mathfrak{L}_j e_2 ) \textrm{ for }
 \mathfrak{L}_j \in E & \textrm{if } j \in I_2 
 \end{cases} 
\] 
for $j$ such that $k_{j,1} < k_{j,2}$, 
where 
\[
 2[K:K_0 ]\sum_{i} v_p (\alpha_i) 
 = \sum_{j} (k_{j,1} +k_{j,2} +1), 
\]
and $I_1 ,I_2 $ are any disjoint sets such that 
$I_1 \cup I_2 =\{j \mid k_{j,1} < k_{j,2}\}$ 
and 
\[
 [K:K_0 ]\sum_{i} v_p (\alpha_i) 
 \leq 
 \sum_{j \in I_1 } k_{j,1} +
 \sum_{j \in I_2 } k_{j,2} + 
 \sum_{k_{j,1} =k_{j,2} } k_{j,2} . 
\]
\end{prop}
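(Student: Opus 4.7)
The plan is to package the analysis already carried out in the preceding paragraphs of this subsection into a clean existence statement, filling in only the few points not explicitly treated there. The preceding text has (i) chosen the basis $e_1,e_2$ with $N(e_1)=e_2,\,N(e_2)=0$ and derived the shape of $\phi$ from $N\phi=p\phi N$, (ii) computed $t_{\mathrm{H}}(D),t_{\mathrm{N}}(D)$ and $t_{\mathrm{H}}(D_2'),t_{\mathrm{N}}(D_2')$, and (iii) used Lemmas \ref{inv} and \ref{basis} to write $\Fil_j^{-k_{j,1}}D_F$ either as $E_j e_2$ or as $E_j(e_1-\mathfrak{L}_j e_2)$. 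So the proof amounts to verifying that the only objects left unconstrained -- $\alpha$, the partition $I_1\sqcup I_2$, and the scalars $\mathfrak{L}_j$ -- range over exactly the sets described, subject to the two numerical conditions coming from admissibility.

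The first step is to explain the shape of $\tau$ and the choice of $F$. Since $N\neq 0$, the representation $\rho$ is not potentially crystalline, so by Lemma 2.2 (the reducibility lemma cited from \cite{GM}) $\tau$ is scalar; being $2$-dimensional this means $\tau\simeq\chi|_{I_K}\oplus\chi|_{I_K}$ for a character $\chi$ of $W_K$ finite on $I_K$. The image of $\chi|_{I_K}$ is a finite subgroup of $E^{\times}$, hence cyclic, so its kernel cuts out a totally ramified cyclic extension $F/K$ with $\chi|_{I_F}$ trivial, and we may assume $\rho$ is $F$-semi-stable. Because $F/K$ is totally ramified, $I(F/K)=\Gal(F/K)$, so the $\Gal(F/K)$-action on $D$ is $F_0\otimes_{\mathbb{Q}_p}E$-linear.

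The key verification is that the $\Gal(F/K)$-action on $D$ is the scalar $\chi$. Here I would use the definition of the Weil--Deligne action recalled in Section 2: for $g\in W_K$ the $F_0$-linear action on $D_i$ is $(g\bmod W_F)\circ\phi^{-m_0\alpha(g)}$. For $g\in I(F/K)\subset I_K$ we have $\alpha(g)=0$, so this Weil--Deligne action on $D_i$ agrees with the $\Gal(F/K)$-action. Since $\mathrm{WD}(\rho)|_{I_K}=\tau=\chi\oplus\chi$ is scalar, the action on every $D_i$ is multiplication by $\chi$, hence the $\Gal(F/K)$-action on $D$ itself is $ge_1=\chi(g)e_1,\,ge_2=\chi(g)e_2$. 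This is the one step that is not a direct consequence of earlier computation.

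Given the Galois action, the filtration statement follows by combining Lemmas \ref{inv} and \ref{basis}. Write $\Fil_j^{-k_{j,1}}D_F=E_j(a_je_1+b_je_2)$ with $a_je_1+b_je_2$ $\Gal(F/K)$-invariant over $E_j$; since $e_1,e_2$ transform by $\chi$, the coordinates $a_j,b_j$ transform by $\chi^{-1}$, so Lemma \ref{basis} yields $a_j=a_j'x_1$, $b_j=b_j'x_1$ with $a_j',b_j'\in E$ and $x_1\in E_j^{\times}$. If $a_j=0$ put $j\in I_1$ and $\Fil_j^{-k_{j,1}}D_F=E_j e_2$; if $a_j\neq 0$ put $j\in I_2$ and rescale to $\Fil_j^{-k_{j,1}}D_F=E_j(e_1-\mathfrak{L}_je_2)$ with $\mathfrak{L}_j=-b_j'/a_j'\in E$. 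The two numerical inequalities are exactly the ones obtained earlier from $t_{\mathrm{H}}(D)=t_{\mathrm{N}}(D)$ and $t_{\mathrm{H}}(D_2')\leq t_{\mathrm{N}}(D_2')$ applied to the unique non-trivial $(\phi,N)$-stable submodule $D_2'=(F_0\otimes_{\mathbb{Q}_p}E)e_2$, and by \cite[Proposition 3.1.1.5]{BM} these two inequalities imply admissibility. The only potential obstacle is the verification in the third paragraph that the Galois action really is scalar $\chi$ on $D$ (and not merely on the Weil--Deligne representation attached to $D$); everything else is straightforward bookkeeping.
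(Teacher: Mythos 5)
Your proposal is correct and follows essentially the same route as the paper, whose proof of this proposition is exactly the preceding discussion of Section 3.1: the basis adapted to $N$, the computations of $t_{\mathrm{H}}$ and $t_{\mathrm{N}}$ for $D$ and for the unique non-trivial $(\phi,N)$-stable submodule $D'_2=(F_0\otimes_{\mathbb{Q}_p}E)e_2$, and Lemmas \ref{inv}, \ref{isom}, \ref{basis} for the filtration. The one point you elaborate beyond the paper --- deducing from the Weil--Deligne construction that the $\Gal(F/K)$-action is literally scalar multiplication by $\chi$ --- is a correct filling-in of a step the paper states without proof.
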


\subsection{Principal series case}

In this case, 
$\tau \simeq \chi_1 |_{I_K} \oplus \chi_2 |_{I_K}$ 
and $N=0$. 
We can take a totally ramified abelian extension 
$F$ of $K$ such that 
$\chi_1 |_{I_F}$ and $\chi_2 |_{I_F}$ 
are trivial. 
Then $\chi_1$ and $\chi_2$ determine 
the action of $\Gal (F/K)$ on $D$, 
which is again denoted by the same symbols. 

\subsubsection{Irreducible case}
First, we assume that 
$\chi_1 |_{I_K} =\chi_2 |_{I_K}$ 
and $D$ has no non-trivial 
$\phi$-stable 
$(F_0 \otimes_{\mathbb{Q}_p} E)$-submodule. 
In this case, we say that $\phi$ is irreducible. 
If not, we say that $\phi$ is reducible. 
We put $\chi =\chi_1$. 

Take bases $e_{i,1} ,e_{i,2}$ of $D_i$ 
over $E$ for $1 \leq i \leq m_0$ 
so that 
\[
 \phi (e_{1,1} )=ae_{2,1} +ce_{2,2} ,\ 
 \phi (e_{1,2} )=be_{2,1} +de_{2,2} 
\]
for $a,b,c,d \in E$, and 
\[
 \phi (e_{i,1} )=e_{i+1,1} ,\ 
 \phi (e_{i,2} )=e_{i+1,2} 
\]
for $2 \leq i \leq m_0$. 
Let $e_1 ,e_2$ be a basis 
of $D$ over $F_0 \otimes_{\mathbb{Q}_p} E$ 
determined by 
$(e_{i,1} )_i$, $(e_{i,2} )_i$ 
under the isomorphism 
$D \xrightarrow{\sim} \prod_i D_i$. 
We will use the same notation in the classification of 
other cases.

Since $\phi$ is irreducible, 
$b \neq 0$ and $c \neq 0$. 
Modifying $e_{i,1}$ by 
a scalar multiple of $e_{i,2}$, 
we may assume $d=0$. 
If $X^2 -aX-bc$ is reducible in $E[X]$, 
by replacing the bases, we can see 
that $\phi$ is reducible. 
This is a contradiction. 
So $X^2 -aX-bc$ is irreducible in $E[X]$. 

Conversely, we suppose that 
$a,b,c \in E$ are given, $d=0$, and 
$X^2 -aX-bc$ is irreducible in $E[X]$. 
Then the above description determines 
an endomorphism $\phi$. 
We prove that this endomorphism $\phi$ 
is irreducible. 
If $\phi$ is reducible, 
there are $A_i \in GL_2 (E)$ such that 
\[
 A_2 ^{-1} 
 \begin{pmatrix}
  a & b \\ c & 0 
 \end{pmatrix} 
 A_1 ,\ A_3 ^{-1} A_2 ,\ A_4 ^{-1} A_3 ,
 \ldots ,\ A_1 ^{-1} A_{m_0} 
\] 
are all upper triangular matrices. 
Then, multiplying these matrices together, 
we have that 
$A_1 ^{-1} 
 \begin{pmatrix}
  a & b \\ c & 0 
 \end{pmatrix} A_1$ 
is an upper triangular matrix. 
This contradicts that 
$X^2 -aX-bc$ is irreducible in $E[X]$. 

As above, the endomorphism $\phi$ is given by 
$a,b,c \in E$ such that 
$X^2 -aX-bc$ is reducible in $E[X]$. 
Now, by calculation, we have 
\begin{align*}
 t_{\mathrm{H}} (D) &=-[E:K]
 \sum_{j:K \hookrightarrow E}(k_{j,1} +k_{j,2} ), \\ 
 t_{\mathrm{N}} (D) &=[E:F_0 ]
 \, v_p (bc). 
\end{align*}
So the condition 
$t_{\mathrm{H}} (D) = t_{\mathrm{N}} (D)$ is 
equivalent to that 
\[
 -[K:K_0 ]\, v_p (bc) 
 = \sum_{j} (k_{j,1} +k_{j,2} ). 
\]

Since $\phi$ is irreducible, 
$D$ has no non-trivial 
$(\phi,N)$-stable 
$(F_0 \otimes_{\mathbb{Q}_p} E)$-submodule. 
So there is no condition on the filtrations. 
For $j$ such that 
$k_{j,1} < k_{j,2}$, 
by Lemma \ref{isom}, Lemma \ref{inv} and 
Lemma \ref{basis}, we have 
\[
 \Fil_j ^{-k_{j,1}} D_F =E_j (a_j e_1 +b_j e_2 ) 
\]
for $(a_j ,b_j ) \in \mathbb{P}^1 (E)$. 

By studies of the other cases, 
$\phi$ is irreducible only if 
$N=0$ and $\tau \simeq \chi |_{I_K} \oplus \chi |_{I_K}$ 
for some character $\chi$ of $W_K$ 
that is finite on $I_K$. 

\begin{prop} 
We assume that $\phi$ is irreducible. 
Then 
$N=0$ and 
$\tau \simeq \chi |_{I_K} \oplus \chi |_{I_K}$ 
for some character $\chi$ of $W_K$ 
that is finite on $I_K$. 
If we take a totally ramified cyclic 
extension $F$ of $K$ such that 
$\chi$ is trivial on $I_F$, then 
$D=(F_0 \otimes_{\mathbb{Q}_p} E)e_1 \oplus 
 (F_0 \otimes_{\mathbb{Q}_p} E)e_2$ with 
\[
 \phi (e_{1,1} )=ae_{2,1} +ce_{2,2} ,\ 
 \phi (e_{1,2} )=be_{2,1} 
\]
for $a,b \in E^{\times}$ such that 
$X^2 -aX-bc$ is irreducible in $E[X]$, 
\[
 \phi (e_{i,1} )=e_{i+1,1} ,\ 
 \phi (e_{i,2} )=e_{i+1,2} 
\]
for $2 \leq i \leq m_0$, 
\[
 ge_1 =\chi (g)e_1 ,\ ge_2 =\chi (g)e_2 
\]
for $g \in \Gal (F/K)$ and, 
for $j$ such that $k_{j,1} < k_{j,2}$, 
\[
 \Fil_j ^{-k_{j,1} } D_F = 
 E_j (a_j e_1 +b_j e_2 ) 
\] 
for $(a_j ,b_j ) \in \mathbb{P}^1 (E)$, 
where 
\[
 -[K:K_0 ]\, v_p (bc) 
 = \sum_j (k_{j,1} +k_{j,2} ). 
\]
\end{prop}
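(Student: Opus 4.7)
The plan is to read the statement off the discussion preceding the proposition, once we have established the implication ``$\phi$ irreducible $\Rightarrow$ $N=0$ and $\tau$ a scalar.'' For this implication I would invoke a forward reference to the analysis of the remaining cases: in the Steinberg/Special case the line $(F_0 \otimes_{\mathbb{Q}_p} E)e_2$ is visibly $\phi$-stable, the reducible principal series case is reducible by hypothesis, and the supercuspidal construction produces a $\phi$-stable $(F_0 \otimes_{\mathbb{Q}_p} E)$-submodule of rank one. Granting this, one picks a totally ramified cyclic extension $F/K$ trivializing $\chi|_{I_K}$, so that $\rho$ becomes $F$-semi-stable and the $\Gal(F/K)$-action on $D$ is given by $\chi$.

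For the Frobenius, I would start with an arbitrary $E$-basis of $D_{m_0}$ and propagate it backwards by $\phi^{-1}$ to obtain bases $(e_{i,1},e_{i,2})$ of each $D_i$ satisfying $\phi(e_{i,\ell})=e_{i+1,\ell}$ for $i\geq 2$, leaving only $\phi(e_{1,1})=ae_{2,1}+ce_{2,2}$ and $\phi(e_{1,2})=be_{2,1}+de_{2,2}$ to be specified. Irreducibility forces $b,c\in E^\times$; modifying the first basis element by a scalar multiple of the second kills $d$; and the characteristic polynomial $X^2-aX-bc$ of the composite Frobenius $\phi^{m_0}$ on $D_1$ must be irreducible in $E[X]$, for otherwise an eigenline would propagate via $\phi$ to a $\phi$-stable $(F_0 \otimes_{\mathbb{Q}_p} E)$-submodule. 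The converse direction---that any such triple $(a,b,c)$ yields an irreducible $\phi$---is already proved in the excerpt by the product-of-matrices argument.

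For the Galois action and filtration, since $\chi_1|_{I_K}=\chi_2|_{I_K}=\chi|_{I_K}$ and $\Gal(F/K)$ commutes with $\phi$, both $e_1$ and $e_2$ transform by $\chi$. For each $j$ with $k_{j,1}<k_{j,2}$, Lemma \ref{inv} supplies a $\Gal(F/K)$-invariant generator of $\Fil_j^{-k_{j,1}} D_F$; applying Lemma \ref{isom} and Lemma \ref{basis} to the character $\chi|_{\Gal(F/K)}$ writes this generator as $(1\otimes a_j')x_1\, e_1+(1\otimes b_j')x_1\, e_2$ for some $a_j',b_j'\in E$ and some $x_1\in E_j^\times$, so after absorbing $x_1$ the line is parametrized by $(a_j,b_j)\in\mathbb{P}^1(E)$ with no further constraint. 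The admissibility condition $t_{\mathrm{H}}(D)=t_{\mathrm{N}}(D)$ from the excerpt yields the stated identity $-[K:K_0]\,v_p(bc)=\sum_j(k_{j,1}+k_{j,2})$, and weak admissibility on submodules is vacuous since $D$ has none.

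The main obstacle is the forward-referenced step ``$\phi$ irreducible $\Rightarrow$ $N=0$ and $\tau$ scalar,'' which genuinely depends on the detailed structure of the Steinberg and supercuspidal cases developed later in the paper; once that input is granted, everything else is a matter of bookkeeping and invoking the three preparatory lemmas.
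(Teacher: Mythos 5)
Your proposal is correct and follows essentially the same route as the paper: the forward reference to the other cases for ``$\phi$ irreducible $\Rightarrow N=0$ and $\tau$ scalar'' is exactly what the paper does, the normalization of the bases $(e_{i,1},e_{i,2})$ with the irreducibility of $X^2-aX-bc$ detected via the composite Frobenius on $D_1$ matches the paper's ``replacing the bases'' argument, and the filtration and admissibility steps invoke the same three lemmas and the same computation of $t_{\mathrm{H}}(D)=t_{\mathrm{N}}(D)$. No gaps.
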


\subsubsection{Non-split reducible case}

If $D$ has two or more non-trivial 
$\phi$-stable 
$(F_0 \otimes_{\mathbb{Q}_p} E)$-submodules, 
we say that $\phi$ is split. 
If not, we say that $\phi$ is non-split. 
We assume that $\chi_1 |_{I_K} =\chi_2 |_{I_K}$ 
and that $\phi$ is non-split and reducible. 
We put $\chi =\chi_1$. 

Since $\phi$ is reducible, 
we can take bases $e_{i,1} ,e_{i,2}$ of $D_i$ 
over $E$ 
and $a_i ,b_i ,d_i \in E$ 
for all $i$ so that 
\[
 \phi (e_{i,1} )=a_i e_{i+1,1} ,\ 
 \phi (e_{i,2} )=b_i e_{i+1,1} +d_i e_{i+1,2} 
\]
for all $i$. 
Replacing the bases, 
we may assume that $a_i =d_i =1$ 
and $b_i =0$ for $2 \leq i \leq n$. 
Since $\phi$ is non-split, 
$a_1 =d_1 \neq 0$ and $b_1 \neq 0$. 
We put $a=a_1$ and $b=b_1$. 

Conversely, we suppose that 
$a,b \in E^{\times}$ are given. 
Then the above description determines 
an endomorphism $\phi$. 
We prove that 
this endomorphism $\phi$ is non-split. 
If $\phi$ is split, 
there are $A_i \in GL_2 (E)$ such that 
\[
 A_2 ^{-1} 
 \begin{pmatrix}
  a & b \\ 0 & a 
 \end{pmatrix} 
 A_1 ,\ A_3 ^{-1} A_2 ,\ A_4 ^{-1} A_3 ,
 \ldots ,\ A_1 ^{-1} A_{m_0} 
\] 
are all diagonal matrices. 
Then, multiplying these matrices together, 
we have that 
$A_1 ^{-1} 
 \begin{pmatrix}
  a & b \\ 0 & a 
 \end{pmatrix} A_1$ 
is a diagonal matrix. 
This contradicts that 
$b \neq 0$. 

As above, the endomorphism $\phi$ is given by 
$a,b \in E^{\times}$. 
The condition 
$t_{\mathrm{H}} (D) = t_{\mathrm{N}} (D)$ is 
equivalent to that 
\[
 -2[K:K_0 ]\, v_p (a) 
 = \sum_{j} (k_{j,1} +k_{j,2} ). 
\]

Now we have bases $e_{i,1} ,e_{i,2}$ of $D_i$ 
over $E$ such that 
\[
 \phi (e_{1,1} )=ae_{2,1} ,\ 
 \phi (e_{1,2} )=be_{2,1} +ae_{2,2}
\]
for $a,b \in E^{\times}$, and 
\[
 \phi (e_{i,1} )=e_{i+1,1} ,\ 
 \phi (e_{i,2} )=e_{i+1,2} 
\]
for $2 \leq i \leq m_0$. 

For $j:K \hookrightarrow E$ satisfying 
$k_{j,1} < k_{j,2}$, by 
Lemma \ref{inv}, we take 
$a_j ,b_j \in E_j$ such that 
$\Fil_j ^{-k_{j,1}} D_F =E_j (a_j e_1 +b_j e_2)$, 
and $(a_j e_1 +b_j e_2)$ is $\Gal (F/K)$-invariant. 

The only non-trivial 
$(\phi, N)$-stable 
$(F_0 \otimes _{\mathbb{Q}_p} E)$-submodule 
of $D$ is 
$D' _1 =(F_0 \otimes _{\mathbb{Q}_p} E)e_1$. 
The condition 
$t_{\mathrm{H}} (D' _1 )\leq t_{\mathrm{N}} (D' _1 )$ 
is equivalent to that 
\[
 -[K:K_0 ]\, v_p (a) 
 \leq 
 \sum_{b_j =0} k_{j,1} +
 \sum_{b_j \neq 0} k_{j,2} + 
 \sum_{k_{j,1} =k_{j,2} } k_{j,2} . 
\]
As in the special or Steinberg case, 
for $j$ such that 
$b_j \neq 0$, 
\[
 \Fil_j ^{-k_{j,1}} D_F = 
 E_j (-\mathfrak{L}_j e_1 +e_2 ) 
\]
for $\mathfrak{L}_j \in E$. 

By studies of the other cases, 
$\phi$ is non-split reducible only if 
$N=0$ and 
$\tau \simeq \chi |_{I_K} \oplus \chi |_{I_K}$ 
for some character $\chi$ of $W_K$ 
that is finite on $I_K$. 

\begin{prop}
We assume that $\phi$ is 
non-split reducible. Then $N=0$ and 
$\tau \simeq \chi |_{I_K} \oplus \chi |_{I_K}$ 
for some character $\chi$ of $W_K$ 
that is finite on $I_K$. 
If we take a totally ramified cyclic 
extension $F$ of $K$ such that 
$\chi$ is trivial on $I_F$, then 
$D=(F_0 \otimes_{\mathbb{Q}_p} E)e_1 \oplus 
 (F_0 \otimes_{\mathbb{Q}_p} E)e_2$ with 
\[
 \phi (e_{1,1} )=ae_{2,1} ,\ 
 \phi (e_{1,2} )=be_{2,1} +ae_{2,2}
\]
for $a,b \in E^{\times}$, 
\[
 \phi (e_{i,1} )=e_{i+1,1} ,\ 
 \phi (e_{i,2} )=e_{i+1,2} 
\]
for $2 \leq i \leq m_0$, 
\[
 ge_1 =\chi (g)e_1 ,\ ge_2 =\chi (g)e_2 
\]
for $g \in \Gal (F/K)$ and 
\[
 \Fil_j ^{-k_{j,1} } D_F =
 \begin{cases}
 E_j e_1 & \textrm{if } j \in I_1 , \\
 E_j (-\mathfrak{L}_j e_1 +e_2 ) \textrm{ for }
 \mathfrak{L}_j \in E & \textrm{if } j \in I_2 
 \end{cases} 
\] 
for $j$ such that 
$k_{j,1} < k_{j,2}$, where 
\[
 -2[K:K_0 ]\, v_p (a) 
 = \sum_j (k_{j,1} +k_{j,2} ), 
\]
and $I_1 ,I_2 $ are any disjoint sets such that 
$I_1 \cup I_2 =\{j \mid k_{j,1} < k_{j,2}\}$ 
and 
\[
 -[K:K_0 ]\, v_p (a) 
 \leq 
 \sum_{j \in I_1 } k_{j,1} + 
 \sum_{j \in I_2 } k_{j,2} + 
 \sum_{k_{j,1} =k_{j,2} } k_{j,2} . 
\]
\end{prop}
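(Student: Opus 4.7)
The proof is essentially an assembly of the analysis carried out in the prose preceding the statement, together with verifying a converse. Here is how I would organize it.

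First, for the ``only if'' direction, I would invoke the parallel classification results for the other cases (to be established separately): if $N \neq 0$ we fall into the Special/Steinberg case treated above, and if $\tau$ is as in Lemma \ref{form}(1) with $\chi_1|_{I_K} \neq \chi_2|_{I_K}$, or as in cases (2) or (3), the $\phi$ obtained would split (or be irreducible) rather than be non-split reducible. This forces $N=0$ and $\tau \simeq \chi|_{I_K} \oplus \chi|_{I_K}$, and the existence of the totally ramified cyclic $F$ over which $\chi$ becomes trivial on inertia is standard local class field theory. This is the ``forward reference'' step and I would acknowledge it as depending on the subsequent subsections.

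Next, I would reproduce the basis normalization from the prose: start with arbitrary bases $e_{i,1}, e_{i,2}$ of each $D_i$, use reducibility of $\phi$ to triangularize each block, then absorb the diagonal entries for $2 \leq i \leq m_0$ by rescaling so that $a_i = d_i = 1$ and $b_i = 0$ there, and finally observe that non-splitness forces $a_1 = d_1 \neq 0$ and $b_1 \neq 0$, giving the stated form with parameters $a, b \in E^\times$. The converse check that this recipe always produces a non-split $\phi$ is the matrix-conjugation argument already given. The admissibility equality is obtained by direct calculation of $t_{\mathrm{H}}(D)$ (from the Hodge-Tate weights $k_{j,1} + k_{j,2}$ summed over $j$) and $t_{\mathrm{N}}(D) = 2[E:F_0] v_p(a)$, reading off that $\det\phi$ on $D_1$ is $a^2$.

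For the filtration, I would apply Lemma \ref{inv} to write $\Fil_j^{-k_{j,1}} D_F = E_j(a_j e_1 + b_j e_2)$ with $(a_j e_1 + b_j e_2)$ Galois-invariant when $k_{j,1} < k_{j,2}$, then use Lemmas \ref{isom} and \ref{basis} (with $M = F$, $K' = K$, and $\chi$ the character governing the Galois action) to extract a common factor $x_1 \in E_j$ and reduce the datum to an element of $\mathbb{P}^1(E)$. The unique nontrivial $(\phi,N)$-stable $(F_0 \otimes_{\mathbb{Q}_p} E)$-submodule is $D'_1 = (F_0 \otimes_{\mathbb{Q}_p} E)e_1$ (coming from the triangular form and $N=0$), and computing $t_{\mathrm{H}}(D'_1)$ by which indices $j$ satisfy $b_j = 0$ (so the line in $\Fil_j^{-k_{j,1}} D_F$ lies on $D'_1$ and contributes $k_{j,2}$) versus $b_j \neq 0$ (contributes $k_{j,1}$), together with $t_{\mathrm{N}}(D'_1) = -[E:F_0] v_p(a)$, yields the inequality after partitioning into $I_1 = \{j : b_j = 0\}$ and $I_2 = \{j : b_j \neq 0\}$ and normalizing to $\Fil_j^{-k_{j,1}} D_F = E_j(-\mathfrak{L}_j e_1 + e_2)$ for $j \in I_2$.

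Finally, for the converse (that any such data give an admissible filtered module), I would verify that the only nontrivial $(\phi,N)$-stable $(F_0 \otimes_{\mathbb{Q}_p} E)$-submodule is $D'_1$ (the non-splitness of $\phi$ is exactly what excludes $(F_0 \otimes_{\mathbb{Q}_p} E)e_2$), so that the second admissibility condition reduces to the stated inequality for $D'_1$ via \cite[Proposition 3.1.1.5]{BM}. The main obstacle, if any, is the forward dependence on classifications of the other cases needed for the ``only if'' clause identifying $N$ and $\tau$; everything else is a calculation, careful bookkeeping of indices $j$ against the submodule $D'_1$, and a straightforward application of Lemmas \ref{isom}, \ref{inv}, \ref{basis} already proved.
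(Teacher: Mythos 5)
Your overall route is the same as the paper's: cross-reference the other cases to pin down $N=0$ and $\tau\simeq\chi|_{I_K}\oplus\chi|_{I_K}$, normalize the bases of the $D_i$ using reducibility and non-splitness so that the transition matrix is $\begin{pmatrix}a&b\\0&a\end{pmatrix}$ in the first slot and the identity elsewhere, verify non-splitness of the resulting $\phi$ by the matrix-conjugation argument, compute $t_{\mathrm{H}}$ and $t_{\mathrm{N}}$ for $D$ and for the unique non-trivial $(\phi,N)$-stable submodule $D'_1=(F_0\otimes_{\mathbb{Q}_p}E)e_1$, and describe the filtration lines via Lemmas \ref{isom}, \ref{inv} and \ref{basis}. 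The architecture is fine and matches the paper.

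However, the admissibility computation for $D'_1$ as you state it would not produce the stated inequality: the two contributions to $t_{\mathrm{H}}(D'_1)$ are swapped and the sign of $t_{\mathrm{N}}(D'_1)$ is wrong. When $b_j=0$ the line $\Fil_j^{-k_{j,1}}D_F=E_je_1$ lies inside $D'_{1,F}$, so the induced filtration on the $j$-component of $D'_{1,F}$ jumps at $-k_{j,1}$ and such $j$ contribute $k_{j,1}$ (not $k_{j,2}$) to the right-hand side; when $b_j\neq0$ the induced filtration already vanishes above $-k_{j,2}$ and the contribution is $k_{j,2}$. Likewise, since $\phi$ acts on $e_1$ through $(a,1,\dots,1)$, one has $t_{\mathrm{N}}(D'_1)=+[E:F_0]\,v_p(a)$ (consistent with your own $t_{\mathrm{N}}(D)=2[E:F_0]\,v_p(a)$), not $-[E:F_0]\,v_p(a)$. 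Carrying your two values through gives $[K:K_0]\,v_p(a)\leq\sum_{j\in I_1}k_{j,2}+\sum_{j\in I_2}k_{j,1}+\sum_{k_{j,1}=k_{j,2}}k_{j,2}$, which is automatically satisfied because the determinant condition forces $v_p(a)\leq0$, and hence is vacuous; the correct constraint is $-[K:K_0]\,v_p(a)\leq\sum_{j\in I_1}k_{j,1}+\sum_{j\in I_2}k_{j,2}+\sum_{k_{j,1}=k_{j,2}}k_{j,2}$ as in the statement. These are bookkeeping slips rather than a conceptual gap, but as written the step that is supposed to yield the key inequality fails and must be redone with the contributions and the sign corrected.
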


\subsubsection{Split case}

The remaining cases are the following two cases: 
\begin{itemize}
 \item $\chi_1 |_{I_K} =\chi_2 |_{I_K}$ 
 and $\phi$ is split. 
 \item $\chi_1 |_{I_K} \neq \chi_2 |_{I_K}$. 
\end{itemize}

First, we assume that 
$\chi_1 |_{I_K} \neq \chi_2 |_{I_K}$. 
Let $e_1 ,e_2$ be a basis of $D$ over 
$F_0 \otimes_{\mathbb{Q}_p} E$ such that 
$\Gal (F/K)$ acts on $e_1$ by $\chi_1$ 
and $e_2$ by $\chi_2$. 
We put
\[
 \phi (e_1 )=\alpha e_1 +\gamma e_2 ,\ 
 \phi (e_2 )=\beta e_1 +\delta e_2 , 
\]
where 
$\alpha ,\beta ,\gamma ,\delta \in F_0 \otimes_{\mathbb{Q}_p} E$. 
Since $\phi$ commutes with the action of 
$\Gal (F/K)$ and 
$\chi_1 |_{I_K} \neq \chi_2 |_{I_K}$, 
we have $\beta =\gamma =0$. 
So, in the both cases, 
we may assume that $\phi$ is split. 

We take bases $e_{i,1} ,e_{i,2}$ of $D_i$ over $E$ 
so that 
\[
 \phi (e_{1,1} )=ae_{2,1} ,\ 
 \phi (e_{1,2} )=be_{2,2} 
\]
for some $a,b \in E^{\times}$ and 
\[
 \phi (e_{i,1} )=e_{i+1,1} ,\ 
 \phi (e_{i,2} )=e_{i+1,2} 
\]
for $2 \leq i \leq m_0$. 
Let $e_1 ,e_2$ be a basis 
of $D$ over $F_0 \otimes_{\mathbb{Q}_p} E$ 
determined by 
$(e_{i,1} )_i$, $(e_{i,2} )_i$ 
under the isomorphism 
$D \xrightarrow{\sim} \prod_i D_i$. 

Then the condition 
$t_{\mathrm{H}} (D) = t_{\mathrm{N}} (D)$ is 
equivalent to that 
\[
 [K:K_0 ]\, v_p (ab) 
 = \sum_{j} (k_{j,1} +k_{j,2} ). \tag{$S$} 
\]

For $j:K \hookrightarrow E$ satisfying 
$k_{j,1} < k_{j,2}$, by 
Lemma \ref{inv}, we take 
$a_j ,b_j \in E_j$ such that 
$\Fil_j ^{-k_{j,1}} D_F =E_j (a_j e_1 +b_j e_2)$, 
and $(a_j e_1 +b_j e_2)$ is $\Gal (F/K)$-invariant. 

Since $(a_j e_1 +b_j e_2 )$ is 
$\Gal (F/K)$-invariant, 
$g \in \Gal (F/K)$ acts on 
$a_j$ and $b_j$ by 
$\chi_1 (g)^{-1}$ and 
$\chi_2 (g)^{-1}$ respectively. 
By Lemma \ref{isom} and Lemma \ref{basis}, 
there are $x_1 ,x_2 \in E_j$ such that 
$a_j =a_j ' x_1$ and $b_j =b_j ' x_2$ 
for $a_j ' ,b_j ' \in E$. 
Then, for $j$ such that 
$a_j \neq 0$ and $b_j \neq 0$, 
we have 
\[
 \Fil_j ^{-k_{j,1}} D_F = 
 E_j (a_j ' x_1 e_1 +b_j ' x_2 e_2 ) = 
 E_j (e_1 -\mathfrak{L}_j x_0 e_2 ) 
\]
for $\mathfrak{L}_j \in E^{\times}$, 
where we put $x_0 =x_1 ^{-1} x_2$. 

If $a \neq b$, the non-trivial 
$(\phi, N)$-stable 
$(F_0 \otimes _{\mathbb{Q}_p} E)$-submodules 
of $D$ are 
$D' _1 =(F_0 \otimes _{\mathbb{Q}_p} E)e_1$ and 
$D' _2 =(F_0 \otimes _{\mathbb{Q}_p} E)e_2$. 
The condition 
$t_{\mathrm{H}} (D' _1 )\leq t_{\mathrm{N}} (D' _1 )$ 
is equivalent to that 
\[
 [K:K_0 ]\, v_p (a) 
 \leq 
 \sum_{b_j =0} k_{j,1} + 
 \sum_{b_j \neq 0} k_{j,2} + 
 \sum_{k_{j,1} =k_{j,2} } k_{j,2} . 
\]
The condition 
$t_{\mathrm{H}} (D' _2 )\leq t_{\mathrm{N}} (D' _2 )$ 
is equivalent to that 
\[
 [K:K_0 ]\, v_p (b) 
 \leq 
 \sum_{a_j =0} k_{j,1} + 
 \sum_{a_j \neq 0} k_{j,2} + 
 \sum_{k_{j,1} =k_{j,2} } k_{j,2} . 
\]

If $a=b$, the non-trivial 
$(\phi, N)$-stable 
$(F_0 \otimes _{\mathbb{Q}_p} E)$-submodules 
of $D$ are 
$D' _1$, $D' _2$ and 
$D' _{\mathfrak{L}} =(F_0 \otimes _{\mathbb{Q}_p} E) 
 (e_1 -\mathfrak{L} e_2 )$ 
for $\mathfrak{L} \in E^{\times}$. 
For $\mathfrak{L} \in E^{\times}$, 
the condition 
$t_{\mathrm{H}} (D' _{\mathfrak{L}} ) 
 \leq t_{\mathrm{N}} (D' _{\mathfrak{L}} )$ 
is equivalent to that 
\begin{align*}
 [K:K_0 ]\, v_p (a) 
 \leq 
 &\sum_{a_j b_j =0} k_{j,2} + 
 \sum_{k_{j,1} =k_{j,2} } k_{j,2} \tag{$S_{\mathfrak{L}}$} \\ 
 &+\sum_{a_j b_j \neq 0} 
 \bigl\{ t_j (\mathfrak{L} ,\mathfrak{L}_j )k_{j,1} 
 +\bigl( 1-t_j (\mathfrak{L} ,\mathfrak{L}_j ) \bigr)
 k_{j,2} \bigr\}, 
\end{align*}
where 
\[
 t_j (\mathfrak{L} ,\mathfrak{L}_j )= 
 \frac{\bigl| \{ j_F :F \hookrightarrow E \mid 
 j_F \textrm{-component of } 
 \mathfrak{L}_j x_0 \in E_j 
 \textrm{ is } \mathfrak{L} \} \bigr| } 
 {[F:K]}. 
\]
If $t_j (\mathfrak{L} ,\mathfrak{L}_j ) \leq 1/2$, 
the condition $(S_{\mathfrak{L}})$ is 
automatically satisfied by the condition $(S)$. 

We assume that $t_j (\mathfrak{L} ,\mathfrak{L}_j ) > 1/2$. 
Then we have 
\[
 \frac{\Bigl| \Ker \Bigl(\chi_1 \chi_2 ^{-1} : \Gal (F/K) \to 
 \overline{\mathbb{Q}}_p ^{\times} \Bigr) \Bigr|}{[F:K]} 
 > \frac{1}{2} , 
\]
because $\Gal (F/K)$ act on $x_0$ by 
$\chi_1 \chi_2 ^{-1}$. 
This implies that 
$\chi_1 |_{I_K} =\chi_2 |_{I_K}$ 
and 
\[
 x_0 =(x_E )_{j_F} \in 
 \prod_{j_F :F\hookrightarrow E,\, j_F |_K =j} E 
\] 
for some $x_E \in E^{\times}$. 
Then 
$\mathfrak{L}_j x_E =\mathfrak{L}$ 
and 
$t_j (\mathfrak{L} ,\mathfrak{L}_j )=1$. 

\begin{prop}
We assume that $N=0$ and $\phi$ is 
split reducible and 
$\tau \simeq \chi_1 |_{I_K} \oplus \chi_2 |_{I_K}$ 
for some character $\chi_1, \chi_2$ of $W_K$ 
that are finite on $I_K$. 
If we take a totally ramified cyclic 
extension $F$ of $K$ such that 
$\chi_1, \chi_2$ is trivial on $I_F$, then 
$D=(F_0 \otimes_{\mathbb{Q}_p} E)e_1 \oplus 
 (F_0 \otimes_{\mathbb{Q}_p} E)e_2$ with 
\[
 \phi (e_{1,1} )=ae_{2,1} ,\ 
 \phi (e_{1,2} )=be_{2,2} 
\]
for $a,b \in E^{\times}$ and 
\[
 \phi (e_{i,1} )=e_{i+1,1} ,\ 
 \phi (e_{i,2} )=e_{i+1,2} 
\]
for $2 \leq i \leq m_0$ and 
\[
 \Fil_j ^{-k_{j,1} } D_F =
 \begin{cases}
 E_j e_1 & \textrm{if } j \in I_1 , \\
 E_j e_2 & \textrm{if } j \in I_2 , \\
 E_j (e_1 -\mathfrak{L}_j x_{0} e_2 ) \textrm{ for }
 \mathfrak{L}_j \in E^{\times} & \textrm{if } j \in I_3 
 \end{cases} 
\] 
for $j$ such that 
$k_{j,1} < k_{j,2}$, where 
\[
 [K:K_0 ]\, v_p (ab) 
 = \sum_j (k_{j,1} +k_{j,2} ), 
\]
and $I_1 ,I_2 ,I_3 $ are any disjoint sets such that 
$I_1 \cup I_2 \cup I_3 =
 \{j \mid k_{j,1} < k_{j,2}\}$ 
and 
\begin{align*} 
 [K:K_0 ]\, v_p (a) 
 &\leq 
 \sum_{j \in I_1 } k_{j,1} +
 \sum_{j \in I_2 \cup I_3 } k_{j,2} + 
 \sum_{k_{j,1} =k_{j,2} } k_{j,2} , \\
 [K:K_0 ]\, v_p (b) 
 &\leq 
 \sum_{j \in I_2 } k_{j,1} +
 \sum_{j \in I_1 \cup I_3 } k_{j,2} + 
 \sum_{k_{j,1} =k_{j,2} } k_{j,2} , 
\end{align*}
and, if $a=b$ and $\chi_1 |_{I_K} =\chi_2 |_{I_K}$, further 
\begin{align*}
 [K:K_0 ]\, v_p (a) 
 \leq 
 \sum_{j \in I_3 ,\, \mathfrak{L}_j x_E =\mathfrak{L}} k_{j,1} + 
 \sum_{j \in I_3 ,\, \mathfrak{L}_j x_E \neq \mathfrak{L} } k_{j,2} 
 + \sum_{j \in I_1 \cup I_2} k_{j,2} 
 + \sum_{k_{j,1} =k_{j,2}} k_{j,2} 
\end{align*}
for all $\mathfrak{L} \in E^{\times}$. 
\end{prop}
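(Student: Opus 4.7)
The plan is to collect the case analysis already laid out in the preceding paragraphs and verify that the admissibility conditions translate exactly into the inequalities in the statement. First, in the case $\chi_1|_{I_K} \neq \chi_2|_{I_K}$, I would start with a basis $e_1, e_2$ of $D$ over $F_0 \otimes_{\mathbb{Q}_p} E$ on which $\Gal(F/K)$ acts by $\chi_1$ and $\chi_2$ respectively; since $\phi$ commutes with the Galois action and the two characters differ on inertia, the off-diagonal entries of $\phi$ in this basis are forced to vanish, so $\phi$ is split. Hence in both the case $\chi_1|_{I_K} \neq \chi_2|_{I_K}$ and the case $\chi_1|_{I_K} = \chi_2|_{I_K}$ with $\phi$ split, I may normalize so that $\phi(e_{1,1}) = a e_{2,1}$, $\phi(e_{1,2}) = b e_{2,2}$, and $\phi$ acts as the identity on the indices $2 \leq i \leq m_0$.

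Next, I would compute $t_{\mathrm{H}}(D)$ and $t_{\mathrm{N}}(D)$ exactly as in the preceding discussion: the filtration on $D_F$ gives $t_{\mathrm{H}}(D) = -[E:K] \sum_{j}(k_{j,1} + k_{j,2})$, while the induced Frobenius on $\bigwedge^2 D$ is multiplication by an element of valuation equivalent to $v_p(ab)$, producing $t_{\mathrm{N}}(D) = [E:F_0]\, v_p(ab)$; equating them yields the relation $(S)$ in the statement. For the Hodge filtration on each component of weight gap, Lemma \ref{inv} gives a Galois-invariant generator $a_j e_1 + b_j e_2$, and then applying Lemma \ref{isom} and Lemma \ref{basis} to the characters $\chi_1$ and $\chi_2$ yields elements $x_1, x_2 \in E_j$ with $a_j = a_j' x_1$ and $b_j = b_j' x_2$ for $a_j', b_j' \in E$. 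Dividing by a nonzero factor, the three possibilities $a_j = 0$, $b_j = 0$, or both nonzero correspond to the three index sets $I_2, I_1, I_3$ and the three flags in the statement, with $\mathfrak{L}_j \in E^\times$ and $x_0 = x_1^{-1} x_2$.

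The remaining step is to read off the inequalities from the $(\phi, N)$-stable submodules of $D$. The submodules $D'_1 = (F_0 \otimes_{\mathbb{Q}_p} E) e_1$ and $D'_2 = (F_0 \otimes_{\mathbb{Q}_p} E) e_2$ are always present; computing their induced Hodge and Newton numbers as in the preceding paragraphs yields the first two inequalities of the statement. When $a = b$, the additional submodules $D'_{\mathfrak{L}} = (F_0 \otimes_{\mathbb{Q}_p} E)(e_1 - \mathfrak{L} e_2)$ for $\mathfrak{L} \in E^\times$ appear, and the condition $t_{\mathrm{H}}(D'_{\mathfrak{L}}) \leq t_{\mathrm{N}}(D'_{\mathfrak{L}})$ is the inequality $(S_{\mathfrak{L}})$.

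The main obstacle is reducing $(S_{\mathfrak{L}})$ to the clean form stated, and in particular showing it is vacuous unless $\chi_1|_{I_K} = \chi_2|_{I_K}$. For this I would argue that $(S_{\mathfrak{L}})$ automatically follows from $(S)$ whenever $t_j(\mathfrak{L}, \mathfrak{L}_j) \leq 1/2$ for every $j$, so the constraint is non-trivial only when some $t_j(\mathfrak{L}, \mathfrak{L}_j) > 1/2$. Because $\Gal(F/K)$ acts on $x_0$ via $\chi_1 \chi_2^{-1}$, this inequality forces the kernel of $\chi_1 \chi_2^{-1}$ on $\Gal(F/K)$ to have index strictly less than $2$, hence to be all of $\Gal(F/K)$; by the construction of $x_1, x_2$ from Lemma \ref{basis} this means $\chi_1|_{I_K} = \chi_2|_{I_K}$ and $x_0$ descends to a single element $x_E \in E^\times$, so that $t_j(\mathfrak{L}, \mathfrak{L}_j) = 1$ precisely when $\mathfrak{L}_j x_E = \mathfrak{L}$ and equals $0$ otherwise. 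Substituting this into $(S_{\mathfrak{L}})$ gives exactly the last inequality in the statement, completing the classification.
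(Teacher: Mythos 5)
Your proposal matches the paper's own argument essentially step for step: the reduction of the case $\chi_1|_{I_K}\neq\chi_2|_{I_K}$ to the split form of $\phi$ via commutation with the Galois action, the normalization of $\phi$ on components $D_i$, the computation of $t_{\mathrm{H}}(D)$ and $t_{\mathrm{N}}(D)$ giving $(S)$, the use of Lemmas \ref{isom}, \ref{inv}, \ref{basis} to produce the invariant generator $a_j'x_1e_1+b_j'x_2e_2$ of the filtration step and the element $x_0=x_1^{-1}x_2$, the analysis of the submodules $D_1'$, $D_2'$, and (when $a=b$) $D_{\mathfrak{L}}'$, and the argument that $(S_{\mathfrak{L}})$ follows from $(S)$ when $t_j(\mathfrak{L},\mathfrak{L}_j)\leq 1/2$, while $t_j>1/2$ forces $\chi_1|_{I_K}=\chi_2|_{I_K}$ and the descent of $x_0$ to a constant $x_E$ with $t_j\in\{0,1\}$. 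This is the same route as the paper, and the proof is correct.
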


\subsection{Supercuspidal case}

In this case, 
$N=0$ and 
$\tau \simeq \Ind_{W_{K'}} ^{W_K} (\chi) |_{I_K}$ 
for a quadratic extension $K'$ of $K$ and 
a character $\chi$ of $W_{K'}$ that is 
finite on $I_{K'}$. 
Let $k'$ be the 
residue field of $K'$. 
We take a totally ramified abelian extension 
$L$ of $K'$ such that 
$\chi |_{I_L}$ is trivial. 

For a uniformizer $\pi'$ of $K'$ 
and a positive integer $n$, 
let $K' _{\pi' ,n}$ be 
the Lubin-Tate extension of $K'$ 
generated by the ${\pi' }^n$-torsion points. 
For any $p$-adic field $M$ and 
a positive integer $n$, 
we put $U_M ^{(n)} =1+\mathfrak{p}_M ^n$. 
Then we have 
\[
 \Gal (K' _{\pi' ,n} /K') \cong 
 (\mathcal{O}_{K'} /\mathfrak{p}_{K'} ^n )^{\times} 
 \cong {k' }^{\times} \times 
 \bigl( U_{K'} ^{(1)} /U_{K'} ^{(n)} \bigr). 
\] 
For any $p$-adic field $M$ and 
a positive integer $m$, 
let $M _m$ be the unramified 
extension of $M$ of degree $m$. 

\subsubsection{Unramified case}

We first treat the case in 
$(2)$ of Lemma \ref{form}, where 
$K'$ is unramified over $K$ 
and $\chi$ does not extend 
to $W_K$. 
We take a uniformizer $\pi$ of $K$. 
This is also a uniformizer of $K'$. 
We take positive integers $m_1$ and $n_1$ 
so that $L$ is contained in 
$K' _{m_1} K' _{\pi , {n_1}}$, 
and put $F=K' _{m_1} K' _{\pi , {n_1}}$. 
Then $\rho$ is crystalline over $F$, 
and $F$ is a Galois extension of $K$. 
     
We put $f(X) =\pi X +X^{q^2}$. 
For a positive integer $n$, 
let $f^{(n)} (X)$ be the $n$-th 
iterate of $f(X)$. 
We take a root 
$\theta$ of $f^{(n_1)} (X)$ 
in $K' _{\pi ,n_1 }$ 
that is not a root of 
$f^{(n_1-1)} (X)$. 
Then $K' _{\pi ,n_1} =K' (\theta)$. 
We can see that 
$K (\theta)$ is 
a totally ramified extension of $K$ 
and that 
$F$ is an unramified extension of 
$K(\theta)$ of degree $2m_1$. 
Now the restriction 
$\Gal \bigl( F/K(\theta)\bigr) \to \Gal (K' _{m_1 } /K)$ 
is an isomorphism, 
and $\Gal (F/K)$ is a semi-direct product 
of $\Gal \bigl( F/K(\theta)\bigr)$ by 
$\Gal (F/K' _{m_1 })$. 
We take a generator $\sigma$ of 
$\Gal (F/K(\theta))$. 
Then the restriction $\sigma|_{K'}$ 
is the non-trivial element of 
$\Gal (K' /K)$. 

We consider a decomposition 
\[
 U_{K'} ^{(1)} /U_{K'} ^{(n_1 )} = 
 U_{n_1 ,+}
 \times 
 U_{n_1 ,-} 
\]
of abelian groups 
such that 
$\sigma (\gamma_1 )=\gamma_1$ for 
$\gamma_1 \in U_{n_1 ,+}$ 
and 
$\sigma (\gamma_2 )=\gamma_2 ^{-1}$ for 
$\gamma_2 \in U_{n_1 ,-}$. 
There is an exact sequence
\[
 1 \to U_K ^{(1)} /U_K ^{(n_1 )} \to 
 U_{K'} ^{(1)} /U_{K'} ^{(n_1 )} \to 
 U_{K'} ^{(1)} /U_{K'} ^{(n_1 )} 
\] 
where  
the first map is induced from 
a natural inclusion 
and the second map is induced from 
a map 
\[
 U_{K'} ^{(1)} \to U_{K'} ^{(1)} ;\ 
 g \mapsto \sigma(g)g^{-1}. 
\]
Then, by the above exact sequence, we see that 
\[
 U_{n_1 ,+} \cong U_K ^{(1)} /U_K ^{(n_1 )} ,\ 
 U_{n_1 ,-} \cong U_{K'} ^{(1)} /
 \bigl(U_K ^{(1)} U_{K'} ^{(n_1 )} \bigr) 
\]
and $|U_{n_1 ,+} |=|U_{n_1 ,-} | =q^{n_1 -1}$. 

Now, the restriction 
$\Gal (F/K' _{m_1 }) \to \Gal (K' _{\pi ,n_1 } /K')$ 
is an isomorphism. 
Then we can prove that, under an identification 
\[
 \Gal (F/K' _{m_1 }) 
 \cong 
 \Gal (K' _{\pi ,n_1 } /K') 
 \cong {k' }^{\times} \times 
 U_{n_1 ,+}
 \times 
 U_{n_1 ,-} , 
\]
we have 
\[ 
 \sigma^{-1} \delta \sigma =\delta^q ,\ 
 \sigma^{-1} \gamma_1 \sigma =\gamma_1 
 \textrm{ and }
 \sigma^{-1} \gamma_2 \sigma =\gamma_2 ^{-1} \tag{$\ast$} 
\] 
for $\delta \in {k'}^{\times}$, 
$\gamma_1 \in U_{n_1 ,+}$ 
and 
$\gamma_2 \in U_{n_1 ,-}$. 

Considering $\chi |_{I_K}$ as 
a character of 
\[
 I (F/K) 
 \cong {k' }^{\times} \times 
 U_{n_1 ,+}
 \times 
 U_{n_1 ,-} , 
\]
we write 
$\chi =\omega^s \cdot \chi_1 \cdot \chi_2$, 
where $\omega$ is the Teichm\"{u}ller character, 
$s$ is an integer, 
and $\chi_1$ and $\chi_2$ are characters of 
$U_{n_1 ,+}$ 
and 
$U_{n_1 ,-}$ 
respectively. 
The condition that $\chi$ does not extend to 
$W_K$ is equivalent to that 
$\chi \neq \chi^{\sigma}$ on $W_{K'}$, 
and it is further equivalent to that 
$\chi \neq \chi^{\sigma}$ on $I_{K'}$. 
This last condition is equivalent to that 
$s \not\equiv 0 \mod q+1$ or 
$\chi_2 ^2 \neq 1$. 

Now we have $[F_0 :\mathbb{Q}_p ]=2m_0 m_1$. 
We take bases $e_{i,1}$, $e_{i,2}$ of $D_i$ 
over $E$ for $1 \leq i \leq 2m_0 m_1$ so that 
\begin{align*}
 \delta e_{i,1} &=\omega^s (\delta) e_{i,1} ,& 
 \gamma_1 e_{i,1} &=\chi_1 (\gamma_1 )e_{i,1} ,&
 \gamma_2 e_{i,1} &=\chi_2 (\gamma_2 ) e_{i,1} , \\ 
 \delta e_{i,2} &=\omega^{qs} (\delta) e_{i,2} ,& 
 \gamma_1 e_{i,2} &=\chi_1 (\gamma_1 )e_{i,2} ,&
 \gamma_2 e_{i,2} &=\chi_2 (\gamma_2 )^{-1} e_{i,2} 
\end{align*}
for $\delta \in {k'}^{\times}$, 
$\gamma_1 \in U_{n_1 ,+}$ 
and 
$\gamma_2 \in U_{n_1 ,-}$. 

\begin{rem}
A normalization of bases here is different from 
that in \cite[3.3.2]{GM}. 
We prefer that the action of $\delta$ on 
$e_{i,1}$, $e_{i,2}$ is the same form for all $i$. 
In stead of this, the action of $\sigma$ 
does not preserve lines generated by 
$e_1$ and $e_2$ as we see in the below. 
\end{rem}

Since $\sigma$ takes $D_i$ to $D_{i+m_0}$, 
we have that 
\[
 \sigma e_{i,1} =a_{i+m_0} e_{i+m_0 ,2}, \ 
 \sigma e_{i,2} =b_{i+m_0} e_{i+m_0 ,1}
\]
for some $a_{i+m_0} ,b_{i+m_0} \in E^{\times}$ by $(\ast)$. 
Because $\sigma ^{2m_1} =1$, we see that 
\[
 \prod_{l=1} ^{m_1} (a_{i+2lm_0 -m_0} 
 b_{i+2lm_0} ) =1 
\]
for all $i$. 
Replacing $e_{i,1}$ and $e_{i,2}$ by 
their scalar multiples, 
we may assume that 
\[
 \sigma e_{i,1} =e_{i+m_0 ,2}, \ 
 \sigma e_{i,2} =e_{i+m_0 ,1} . 
\]

Since $\phi$ takes $D_i$ to $D_{i+1}$ 
and commutes with the action of $I (F/K)$, 
we have that
\[
 \phi (e_{i,1} )=\frac{1}{\alpha_{i+1}} e_{i+1,1} , \ 
 \phi (e_{i,2} )=\frac{1}{\beta_{i+1}} e_{i+1,2} 
\] 
for some $\alpha_{i+1} ,\beta_{i+1} \in E^{\times}$ 
for all $i$. 
Since $\phi$ commutes with 
the action of $\sigma$, 
we have $\alpha_i =\beta_{i+m_0}$ 
and $\beta_i =\alpha_{i+m_0}$ for all $i$. 
Replacing $e_{i,1}$ and $e_{i,2}$ by their scalar 
multiples, we may further assume that 
$\alpha_i =\beta_i =1$ for 
$2 \leq i \leq m_0$. 

Let $e_1$, $e_2$ be a basis of 
$D$ over $F_0 \otimes_{\mathbb{Q}_p} E$ 
determined by $(e_{i,1} )_i$, 
$(e_{i,2} )_i$ under the isomorphism 
$D \xrightarrow{\sim} \prod_i D_i$. 
Then $\sigma e_1 =e_2$ and 
$\sigma e_2 =e_1$. 

The condition 
$t_{\mathrm{H}} (D) = t_{\mathrm{N}} (D)$ is 
equivalent to that 
\[
 [K:K_0 ]\, v_p (\alpha_1 \beta_1 ) 
 = \sum_{j} (k_{j,1} +k_{j,2} ). \tag{$U$} 
\]

For $j:K \hookrightarrow E$ satisfying 
$k_{j,1} < k_{j,2}$, by 
Lemma \ref{inv}, we take 
$a_j ,b_j \in E_j$ such that 
$\Fil_j ^{-k_{j,1}} D_F =E_j (a_j e_1 +b_j e_2)$, 
and $(a_j e_1 +b_j e_2)$ is 
$\Gal (F/K)$-invariant. 
By $\sigma (a_j e_1 +b_j e_2 )=(a_j e_1 +b_j e_2 )$, 
we get $\sigma (a_j )=b_j$ and 
$\sigma (b_j )=a_j$. 
So $a_j \in E_j ^{\times}$ if and only if 
$b_j \in E_j ^{\times}$. 

Since $(a_j e_1 +\sigma (a_j) e_2 )$ is 
$\Gal (F/K)$-invariant, 
$\sigma^2 (a_j )=a_j$ and 
$g \in I(F/K)$ acts on 
$a_j$ by $\chi (g)^{-1}$. 
We prove that there are 
$x_{j,1} ,x_{j,2} \in E_j$ 
such that 
\begin{itemize}
 \item 
 $a_j$ satisfies the above condition if and only if 
 $a_j =a_{j,1} x_{j,1} +a_{j,2} x_{j,2}$ 
 for some $a_{j,1} ,a_{j,2} \in E$, 
 \item 
 for $a_{j,1} ,a_{j,2} \in E$, we have 
 $a_{j,1} x_{j,1} +a_{j,2} x_{j,2} \in E_j ^{\times}$ 
 if and only if $a_{j,1} \neq 0$ and $a_{j,2} \neq 0$. 
\end{itemize} 
By Lemma \ref{isom}, we may replace 
$E_j$ by $F \otimes_K E$. 
Then $\sigma^2 (a_j )=a_j$ if and only if 
$a_j \in K' _{\pi,n_1} \otimes_K E$. 
By Lemma \ref{basis}, we get the claim. 
We put 
$x_j (a_{j,1} ,a_{j,2}) =a_{j,1} x_{j,1} +a_{j,2} x_{j,2}$ 
and 
$x_j ^{\sigma} (a_{j,1} ,a_{j,2}) =
 \sigma \bigl( x_j (a_{j,1} ,a_{j,2} )\bigr)$. 
Then we have 
\[
 \Fil_j ^{-k_{j,1}} D_F = 
 E_j \bigl(x_j (a_{j,1} ,a_{j,2}) e_1 + 
 x_j ^{\sigma}(a_{j,1} ,a_{j,2}) e_2 \bigr) 
\]
for $(a_{j,1} ,a_{j,2}) \in \mathbb{P}^1 (E)$. 

The non-trivial 
$(\phi, N)$-stable 
$(F_0 \otimes _{\mathbb{Q}_p} E)$-submodules 
of $D$ are 
$D' _1 =(F_0 \otimes _{\mathbb{Q}_p} E)e_1$, 
$D' _2 =(F_0 \otimes _{\mathbb{Q}_p} E)e_2$ and 
$D' _{\mathfrak{L}} =(F_0 \otimes _{\mathbb{Q}_p} E) 
 (e_1 -\mathfrak{L} e_2 )$ 
for $\mathfrak{L} \in (F_0 \otimes _{\mathbb{Q_p}} E)^{\times}$ 
satisfying the following:
\begin{quote}
If $\mathfrak{L}$ corresponds to $(\mathfrak{L}_i)_i$ 
under the isomorphism 
\[
F_0 \otimes_{\mathbb{Q}_p} E \xrightarrow{\sim} 
\prod_{\sigma_i :F_0 \hookrightarrow E} E,
\]
then $\mathfrak{L}_{i+1} = \frac{\alpha_{i+1} }{\beta_{i+1} } \mathfrak{L}_i$ 
for all i. 
\end{quote}
The condition 
$t_{\mathrm{H}} (D' _1 )\leq t_{\mathrm{N}} (D' _1 )$ 
is equivalent to that 
\[
 [K:K_0 ]\, v_p (\alpha_1) 
 \leq 
 \sum_{a_{j,1} a_{j,2} =0} \frac{k_{j,1} +k_{j,2} }{2} 
 +\sum_{a_{j,1} a_{j,2} \neq 0} k_{j,2} 
 +\sum_{k_{j,1} =k_{j,2} } k_{j,2} ,
\]
the condition 
$t_{\mathrm{H}} (D' _2 )\leq t_{\mathrm{N}} (D' _2 )$ 
is equivalent to that 
\[
 [K:K_0 ]\, v_p (\beta_1) 
 \leq 
 \sum_{a_{j,1} a_{j,2} =0} \frac{k_{j,1} +k_{j,2} }{2} 
 +\sum_{a_{j,1} a_{j,2} \neq 0} k_{j,2} 
 +\sum_{k_{j,1} =k_{j,2} } k_{j,2} ,
\]
and the condition 
$t_{\mathrm{H}} (D' _{\mathfrak{L}} ) 
 \leq t_{\mathrm{N}} (D' _{\mathfrak{L}} )$ 
is equivalent to that 
\begin{align*}
 [K:K_0 ]\, &\frac{v_p (\alpha_1 \beta_1)}{2} 
 \leq 
 \sum_{a_{j,1} a_{j,2} =0} k_{j,2} + 
 \sum_{k_{j,1} =k_{j,2} } k_{j,2} \tag{$U_{\mathfrak{L}}$} \\ 
 &+\sum_{a_{j,1} a_{j,2} \neq 0} 
 \Bigl\{ t_j \bigl(\mathfrak{L} ,(a_{j,1} ,a_{j,2})\bigr) k_{j,1} 
 +\Bigl( 1-t_j \bigl(\mathfrak{L} ,(a_{j,1} ,a_{j,2}) \bigr) \Bigr)
 k_{j,2} \Bigr\}, 
\end{align*}
where 
\[
 t_j \bigl(\mathfrak{L} ,(a_{j,1} ,a_{j,2}) \bigr)= 
 \frac{\Bigl| \bigl\{ j_F :F \hookrightarrow E \bigm| 
 j_F \textrm{-component of } 
 \frac{x_j ^{\sigma} (a_{j,1} ,a_{j,2} )}
 {x_j (a_{j,1} ,a_{j,2})} \in E_j 
 \textrm{ is } -\! \mathfrak{L}_{j_F} \bigr\} \Bigr| } 
 {[F:K]}. 
\]
Here and in the sequel, $\mathfrak{L}_{j_F}$ is the 
$j_F$-component of 
$\mathfrak{L} \in F_0 \otimes _{\mathbb{Q}_p} E 
\subset F \otimes _{\mathbb{Q}_p} E$. 
If 
$t_j \bigl(\mathfrak{L} ,(a_{j,1} ,a_{j,2}) \bigr) \leq 1/2$, 
the condition $(U_{\mathfrak{L}})$ is automatically 
satisfied by the condition $(U)$. 

To prove that 
$t_j \bigl(\mathfrak{L} ,(a_{j,1} ,a_{j,2}) \bigr) \leq 1/2$, 
we assume that 
$t_j \bigl(\mathfrak{L} ,(a_{j,1} ,a_{j,2}) \bigr) > 1/2$. 
We consider a decomposition 
\[
 E_j =\prod_{j_F :F\hookrightarrow E,\, j_F |_K =j} E 
 =\prod_{j_{F_0} :F_0 \hookrightarrow E,\, j_{F_0} |_K =j} 
 \Biggl( 
 \prod_{j_F :F\hookrightarrow E,\, j_F |_{F_0} =j_{F_0}} 
 E \Biggr) . 
\]
Then there is 
$j_{F_0} :F_0 \hookrightarrow E$ 
such that $j_{F_0} |_K =j$ and 
\[
 \frac{\biggl| \Bigl\{ j_F :F \hookrightarrow E \Bigm| 
 j_F |_{F_0} =j_{F_0} \textrm{ and } 
 j_F \textrm{-component of } 
 \frac{x_j ^{\sigma} (a_{j,1} ,a_{j,2} )}
 {x_j (a_{j,1} ,a_{j,2})} \in E_j 
 \textrm{ is } -\! \mathfrak{L}_{j_F} \Bigr\} \biggr| } 
 {[F:F_0 ]} 
\]
is greater than $1/2$. 
Here $\mathfrak{L}_{j_F}$ is independent of 
$j_F$ such that $j_F |_{F_0} =j_{F_0}$, 
because 
$\mathfrak{L} \in F_0 \otimes _{\mathbb{Q}_p} E$. 
Then we have 
\[
 \frac{\Bigl| \Ker \Bigl(\chi (\chi^{\sigma} )^{-1} : I(F/K) \to 
 \overline{\mathbb{Q}}_p ^{\times} \Bigr) \Bigr|}{[F:F_0 ]} 
 > \frac{1}{2} , 
\]
because $I (F/K' )$ act on 
$x_j ^{\sigma} (a_{j,1} ,a_{j,2} ) \big/ \bigl( x_j (a_{j,1} ,a_{j,2}) \bigr)$ 
by 
$\chi (\chi^{\sigma} )^{-1}$. 
This implies that 
$\chi |_{I_{K'}} =\chi^{\sigma} |_{I_{K'}}$, 
and contradicts the condition that 
$\chi$ does not extend to $W_K$. 
Thus we have proved that 
$t_j \bigl(\mathfrak{L} ,(a_{j,1} ,a_{j,2}) \bigr) \leq 1/2$. 

\begin{prop} 
We assume 
$\tau \simeq \Ind_{W_{K'}} ^{W_K} (\chi) |_{I_K}$ 
for the unramified quadratic extension $K'$ of $K$ and 
a character $\chi$ of $W_{K'}$ that is 
finite on $I_{K'}$ 
and does not extend to $W_K$. 
We take a uniformizer $\pi$ of $K$ 
and a totally ramified abelian extension $L$ of $K'$ 
such that $\chi$ is trivial on $I_L$, 
and take positive integers $m_1$ and $n_1$ 
so that $L$ is contained in 
$K' _{m_1} K' _{\pi , {n_1}}$. 
We put $F=K' _{m_1} K' _{\pi , {n_1}}$. 
Then $N=0$ and 
$D=(F_0 \otimes_{\mathbb{Q}_p} E)e_1 \oplus 
 (F_0 \otimes_{\mathbb{Q}_p} E)e_2$ with 
\begin{align*} 
 \phi (e_{i,1} )&=\frac{1}{\alpha_1} e_{i+1,1} ,& 
 \phi (e_{i,2} )&=\frac{1}{\beta_1} e_{i+1,2} ,& 
 &\textrm{if } i \equiv 0 & &\hspace*{-3em} \pmod{2m_0} , \\ 
 \phi (e_{i,1} )&=\frac{1}{\beta_1} e_{i+1,1} ,& 
 \phi (e_{i,2} )&=\frac{1}{\alpha_1} e_{i+1,2} ,& 
 &\textrm{if } i \equiv m_0 & &\hspace*{-3em}\pmod{2m_0} , \\ 
 \phi (e_{i,1} )&=e_{i+1,1} ,& 
 \phi (e_{i,2} )&=e_{i+1,2} ,& 
 &\textrm{if } i \not\equiv 0 & &\hspace*{-3em} \pmod{m_0}
\end{align*}
for $\alpha_1 ,\beta_1 \in E^{\times}$, 
\[
 \sigma e_1 =e_2 ,\ \sigma e_2 =e_1 ,\ 
 ge_1 =\bigl( 1 \otimes \chi (g) \bigr)e_1 ,\ 
 ge_2 =\bigl( 1 \otimes \chi^\sigma (g) \bigr)e_2 
\]
for $g \in I(F/K)$ and, 
for $j$ such that 
$k_{j,1} < k_{j,2}$,
\[
 \Fil_j ^{-k_{j,1}} D_F = 
 E_j \bigl(x_j (a_{j,1} ,a_{j,2}) e_1 + 
 x_j ^{\sigma} (a_{j,1} ,a_{j,2}) e_2 \bigr) 
\] 
for $(a_{j,1} ,a_{j,2}) \in \mathbb{P}^1 (E)$ 
where 
\[
 [K:K_0 ]\, v_p (\alpha_1 \beta_1 ) 
 = \sum_{j} (k_{j,1} +k_{j,2} ) 
\]
and 
\[
 \sum_j k_{j,1} + 
 \sum_{a_{j,1} a_{j,2} =0} \frac{k_{j,2} -k_{j,1} }{2}
 \leq 
 [K:K_0 ]\, v_p (\alpha_1) 
 \leq 
 \sum_j k_{j,2} - 
 \sum_{a_{j,1} a_{j,2} =0} \frac{k_{j,2} -k_{j,1} }{2}.  
\]
The definition of $\sigma$ is in the above discussion. 
\end{prop}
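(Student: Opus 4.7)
The plan is to assemble the structural computations already done in the preceding discussion into a coherent proof. Essentially every normalization, filtration description, and admissibility inequality stated in the proposition is derived above it; what remains is to verify that nothing else is needed.

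First, I would establish the explicit form of $\phi$, $\sigma$, and the $I(F/K)$-action. Writing $\chi|_{I_K} = \omega^s \cdot \chi_1 \cdot \chi_2$ under the decomposition $I(F/K) \cong {k'}^\times \times U_{n_1,+} \times U_{n_1,-}$, each $D_i$ must split as $Ee_{i,1} \oplus Ee_{i,2}$ into the $\chi$- and $\chi^\sigma$-isotypic components; because $\chi \neq \chi^\sigma$ on $I_{K'}$ (the nonextendability hypothesis), these lines are canonically determined. The conjugation rule $(\ast)$ forces $\sigma$ to interchange the two lines, while $\phi$, commuting with $I(F/K)$, preserves them. Using $\sigma^{2m_1} = 1$ one normalizes the $\sigma$-matrix to have entries $1$; then the relation $\phi\sigma = \sigma\phi$ imposes $\alpha_i = \beta_{i+m_0}$, and a further rescaling inside each cycle kills all but two residual scalars $\alpha_1, \beta_1 \in E^\times$, giving the Frobenius formulas in the statement.

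Next, I would read off the filtration. For $j$ with $k_{j,1} < k_{j,2}$, Lemma \ref{inv} produces a $\Gal(F/K)$-invariant generator $a_je_1 + b_je_2$ of $\Fil_j^{-k_{j,1}}D_F$; invariance under $\sigma$ gives $b_j = \sigma(a_j)$ and $\sigma^2(a_j) = a_j$, while invariance under $I(F/K)$ means $g \cdot a_j = \chi(g)^{-1}a_j$. Via Lemma \ref{isom}, $\sigma^2(a_j) = a_j$ becomes the statement that $a_j$ lies in $K'_{\pi,n_1} \otimes_K E$, so Lemma \ref{basis} applied to the tower $K \subset K' \subset K'_{\pi,n_1}$ with character $\chi$ yields the two vectors $x_{j,1}, x_{j,2}$, the parametrization $x_j(a_{j,1},a_{j,2})$, and the invertibility criterion. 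A direct Hodge/Newton computation on $D$ itself yields the first displayed equation $(U)$.

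Finally, I would verify weak admissibility by running through every $(\phi,N)$-stable $(F_0 \otimes_{\mathbb{Q}_p}E)$-submodule, which are exactly $D'_1$, $D'_2$, and the one-parameter family $D'_{\mathfrak{L}}$. The inequalities $t_{\mathrm{H}}(D'_i) \leq t_{\mathrm{N}}(D'_i)$ for $i = 1, 2$ are straightforward, and combined with $(U)$ they collapse to the single two-sided inequality on $[K:K_0]v_p(\alpha_1)$. The main obstacle is showing that the $D'_{\mathfrak{L}}$ conditions impose no extra constraint, i.e.\ that each $(U_{\mathfrak{L}})$ follows from $(U)$; this reduces to proving $t_j\bigl(\mathfrak{L},(a_{j,1},a_{j,2})\bigr) \leq 1/2$. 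Assuming the contrary and decomposing $E_j$ according to $F_0$-embeddings, one finds that more than half of $I(F/K')$ lies in $\Ker\bigl(\chi(\chi^\sigma)^{-1}\bigr)$, which forces $\chi|_{I_{K'}} = \chi^\sigma|_{I_{K'}}$ and hence that $\chi$ extends to $W_K$, contradicting the hypothesis that $\tau$ is of type (2) in Lemma \ref{form}. This closes the proof.
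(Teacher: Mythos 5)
Your proposal is correct and follows essentially the same route as the paper: isotypic decomposition of each $D_i$ under $I(F/K)$ using $\chi\neq\chi^{\sigma}$ on $I_{K'}$, normalization of $\sigma$ and $\phi$ via $\sigma^{2m_1}=1$ and $\phi\sigma=\sigma\phi$, the filtration description through Lemmas \ref{isom}, \ref{inv}, \ref{basis} applied to $K'_{\pi,n_1}\otimes_K E$, and the reduction of the $D'_{\mathfrak{L}}$ conditions to $t_j\leq 1/2$ by the counting argument over $F_0$-embeddings. The only points you leave implicit are that $N=0$ comes from the case hypothesis (the Galois type is non-scalar, so $\rho$ is potentially crystalline) and that the submodules $D'_{\mathfrak{L}}$ are constrained by the $\phi$-stability condition $\mathfrak{L}_{i+1}=\tfrac{\alpha_{i+1}}{\beta_{i+1}}\mathfrak{L}_i$, neither of which affects your argument.
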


\subsubsection{Ramified case}

Next, we treat the case in 
$(3)$ of Lemma \ref{form}, 
where 
$K'$ is ramified over $K$ 
and $\chi |_{I_{K'}}$ does not extend 
to $I_K$. 

Let $\iota_0$ be the non-trivial 
element of $\Gal (K' /K)$. 
We take a uniformizer $\pi'$ of $K'$ 
such that $\iota_0 (\pi' )=-\pi'$. 
Then we have 
$(K' _{\pi' ,n} )^{\iota} =K' _{-\pi' ,n}$ 
for a positive integer $n$ 
and any lift $\iota \in G_K$ 
of $\iota_0$. 
So $K' _{\pi' ,n} K' _{-\pi' ,n}$ 
is a Galois extension of $K$. 
By the class field theory, 
the abelian extensions 
$K' _{\pi' ,n}$ and $K' _{-\pi' ,n}$ 
of $K'$ 
correspond to 
$\langle \pi' \rangle \times (1+\mathfrak{p}_{K'} ^n )$ 
and 
$\langle -\pi' \rangle \times (1+\mathfrak{p}_{K'} ^n )$ 
respectively. 
Then the abelian extension 
$K' _{\pi' ,n} K' _{-\pi' ,n}$ 
of $K'$ corresponds to 
$\langle {\pi'}^2 \rangle \times (1+\mathfrak{p}_{K'} ^n )$. 
So we see that 
$K' _{\pi' ,n} K' _{-\pi' ,n} =K' _2 K' _{\pi' ,n}$. 

We take positive integers $m_1$ and $n_1$ 
so that $L$ is contained in 
$K' _{2m_1} K' _{\pi' , {2n_1 +1}}$, 
and put $F=K' _{2m_1} K' _{\pi' , {2n_1 +1}}$. 
Then $F$ is a Galois extension of $K$, 
and $\rho$ is crystalline over $F$ 
because $\tau |_{I_F}$ is trivial. 

We consider an exact sequence 
\[
 1 \to \Gal (F/K' ) \to 
 \Gal (F/K) \to 
 \Gal (K' /K) \to 1. \tag{$\diamondsuit$} 
\]
Since the restriction 
$\Gal (F/K' _{2m_1} )\to \Gal (K' _{\pi' ,2n_1 +1} /K' )$ 
is an isomorphism, 
\begin{align*}
 \Gal (F/K' )&=\Gal (F/K' _{\pi' ,2n_1 +1} ) 
 \times \Gal (F/K' _{2m_1} ) \\ 
 &\cong 
 \Gal (F/K' _{\pi' ,2n_1 +1} ) 
 \times {k' }^{\times} \times 
 \bigl( U_{K'} ^{(1)} /U_{K'} ^{(2n_1 +1)} \bigr). 
\end{align*}
Let $\sigma$ be a generator of 
$\Gal (F/K_{\pi' ,2n_1 +1} )$, 
and $\delta_0$ be a generator 
of ${k'} ^{\times}$. 

We prove that the exact sequence 
$(\diamondsuit)$ does not split. 
We assume there is a lift 
$\iota \in \Gal (F/K)$ of $\iota_0$ 
such that $\iota^2 =1$. 
By multiplying $\iota$ 
by an element of 
$\Gal (F/K' _{\pi' ,2n_1 +1} ) \subset \Gal (F/K' )$, 
we may assume that $\iota \in I(F/K)$. 
Let $P (F/K)$ be 
the wild ramification subgroup of $I(F/K)$, 
and $I^{\mathrm{t}} (F/K)$ be 
the tame quotient group of $I(F/K)$. 
Let $\Bar{\iota}$ be the image of 
$\iota$ in $I^{\mathrm{t}} (F/K)$. 
If $\Bar{\iota} \neq 1$, 
we multiply $\iota$ by the element 
$\delta_0 ^{(q-1)/2}$ of 
${k'}^{\times} \subset \Gal (F/K' _{2m_1 })$. 
Then we have $\iota \in P(F/K)$, 
but this contradicts that $p \neq 2$. 
Thus we have proved the claim. 

For any lift $\iota \in \Gal (F/K)$, 
we have $\iota^2 \in \Gal (F/K' )$. 
Since the exact sequence $(\diamondsuit)$ 
does not split and $p \neq 2$, 
multiplying $\iota$ 
by an element of $\Gal (F/K' )$, 
we may assume that $\iota^2 =\delta_0$ 
and $\iota \in I(F/K)$. 
We fix this lift $\iota$ in the sequel. 

We consider a decomposition 
\[
 U_{K'} ^{(1)} /U_{K'} ^{(2n_1 +1)} = 
 U_{2n_1 +1 ,+}
 \times 
 U_{2n_1 +1,-} 
\]
of abelian groups 
such that 
$\iota_0 (\gamma_1 )=\gamma_1$ for 
$\gamma_1 \in U_{2n_1 +1,+}$ 
and 
$\iota_0 (\gamma_2 )=\gamma_2 ^{-1}$ for 
$\gamma_2 \in U_{2n_1 +1,-}$. 
There is an exact sequence
\[
 1 \to U_K ^{(1)} /U_K ^{(n_1 +1)} \to 
 U_{K'} ^{(1)} /U_{K'} ^{(2n_1 +1)} \to 
 U_{K'} ^{(1)} /U_{K'} ^{(2n_1 +1)} , 
\] 
where  
the first map is induced from 
a natural inclusion 
and the second map is induced from 
a map 
\[
 U_{K'} ^{(1)} \to U_{K'} ^{(1)} ;\ 
 g \mapsto \iota_0 (g)g^{-1}. 
\]
Then, by the above exact sequence, we see that 
\[
 U_{2n_1 +1 ,+} \cong U_K ^{(1)} /U_K ^{(n_1 +1)} ,\ 
 U_{2n_1 +1 ,-} \cong U_{K'} ^{(1)} /
 \bigl(U_K ^{(1)} U_{K'} ^{(2n_1 +1)} \bigr) 
\]
and $|U_{2n_1 +1,+} |=|U_{2n_1 +1,-} | =q^{n_1}$. 

We can prove that, under an identification 
\[
 \Gal (F/K' _{2m_1 }) 
 \cong 
 \Gal (K' _{\pi' ,2n_1 +1} /K') 
 \cong {k' }^{\times} \times 
 U_{2n_1 +1,+}
 \times 
 U_{2n_1 +1,-} , 
\]
we have 
\[ 
 \iota^{-1} \delta \iota =\delta ,\ 
 \iota^{-1} \gamma_1 \iota =\gamma_1 
 \textrm{ and }
 \iota^{-1} \gamma_2 \iota =\gamma_2 ^{-1} 
\] 
for $\delta \in {k'}^{\times}$, 
$\gamma_1 \in U_{2n_1 +1,+}$ 
and 
$\gamma_2 \in U_{2n_1 +1,-}$. 

Since $K' _{\pi' ,2n_1 +1}$ is not 
a normal extension of $K$, 
we have $\iota^{-1} \sigma \iota \neq \sigma$. 
We put 
$K'' =K' _{\pi' ,2n_1 +1} K' _{-\pi' ,2n_1 +1}$. 
Then $\sigma^2$ is a generator of 
$\Gal (F/K'' )$, and 
$\iota$ determines an automorphism 
of $K''$. So we have 
$\iota^{-1} \sigma^2 \iota =\sigma^2$. 
Since $\sigma^{-1} \iota^{-1} \sigma \iota$ 
is an element of $\Gal (F/K')$ of order $2$ 
and fixes $K_{2m_1}$, 
it is $\delta_0 ^{(q-1)/2}$. 
Hence we have 
\[
 \iota^{-1} \sigma \iota =\sigma \delta_0 ^{(q-1)/2} . 
 \tag{$\star$} 
\]

Considering $\chi |_{I_{K'}}$ as 
a character of 
\[
 I (F/K') 
 \cong {k' }^{\times} \times 
 U_{2n_1 +1,+}
 \times 
 U_{2n_1 +1,-} , 
\]
we write 
$\chi =\omega^s \cdot \chi_1 \cdot \chi_2$, 
where $\omega$ is the Teichm\"{u}ller character, 
$s$ is an integer, 
and $\chi_1$ and $\chi_2$ are characters of 
$U_{2n_1 +1,+}$ 
and 
$U_{2n_1 +1,-}$ 
respectively. 
The condition $\chi$ does not extend to 
$I_K$ is equivalent to that 
$\chi \neq \chi^{\iota}$ on $I_{K'}$, 
and it is further equivalent to 
that $\chi_2 ^2 \neq 1$. 

Now we have $[F_0 :\mathbb{Q}_p ]=2m_0 m_1$. 
We take bases $e_{i,1}$, $e_{i,2}$ of $D_i$ 
over $E$ for $1 \leq i \leq 2m_0 m_1$ so that 
\begin{align*}
 \iota e_{i,1} &=e_{i,2} ,& 
 \delta e_{i,1} &=\omega^s (\delta) e_{i,1} ,& 
 \gamma_1 e_{i,1} &=\chi_1 (\gamma_1 )e_{i,1} ,&
 \gamma_2 e_{i,1} &=\chi_2 (\gamma_2 ) e_{i,1} , \\ 
 \iota e_{i,2} &=\omega^s (\delta_0 )e_{i,1} ,& 
 \delta e_{i,2} &=\omega^s (\delta) e_{i,2} ,& 
 \gamma_1 e_{i,2} &=\chi_1 (\gamma_1 )e_{i,2} ,&
 \gamma_2 e_{i,2} &=\chi_2 (\gamma_2 )^{-1} e_{i,2} 
\end{align*}
for $\delta \in {k'}^{\times}$, 
$\gamma_1 \in U_{n_1 ,+}$ 
and 
$\gamma_2 \in U_{n_1 ,-}$. 

Since $\sigma$ takes $D_i$ to $D_{i+m_0}$, 
as in the unramified case, 
we may assume that 
$\sigma e_{i,1} =e_{i+m_0 ,1}$. 
Then we have that 
$\sigma e_{i,2} =(-1)^s e_{i+m_0 ,2}$ 
by $(\star)$. 

Since $\phi$ takes $D_i$ to $D_{i+1}$ 
and commutes with the action of $I (F/K)$, 
we have that
\[
 \phi (e_{i,1} )=\frac{1}{\alpha_{i+1}} e_{i+1,1} , \ 
 \phi (e_{i,2} )=\frac{1}{\alpha_{i+1}} e_{i+1,2} 
\] 
for some $\alpha_{i+1} \in E^{\times}$ 
for all $i$. 
Further, since $\phi$ commutes with 
the action of $\sigma$, 
we have $\alpha_i =\alpha_{i+m_0}$ for all $i$. 
Replacing $e_{i,1}$ and $e_{i,2}$ by their scalar 
multiples, we may further assume that 
$\alpha_i =1$ for 
$2 \leq i \leq m_0$. 

Let $e_1$, $e_2$ be a basis of 
$D$ over $F_0 \otimes_{\mathbb{Q}_p} E$ 
determined by $(e_{i,1} )_i$, 
$(e_{i,2} )_i$ under the isomorphism 
$D \xrightarrow{\sim} \prod_i D_i$. 
Then $\sigma e_1 =e_1$ and 
$\sigma e_2 =(-1)^s e_2$. 

The condition 
$t_{\mathrm{H}} (D) = t_{\mathrm{N}} (D)$ is 
equivalent to that 
\[
 2[K:K_0 ]\, v_p (\alpha_1 ) 
 = \sum_{j} (k_{j,1} +k_{j,2} ). \tag{$R$} 
\]

For $j:K \hookrightarrow E$ satisfying 
$k_{j,1} < k_{j,2}$, by 
Lemma \ref{inv}, we take 
$a_j ,b_j \in E_j$ such that 
$\Fil_j ^{-k_{j,1}} D_F =E_j (a_j e_1 +b_j e_2)$, 
and $(a_j e_1 +b_j e_2)$ is $\Gal (F/K)$-invariant. 
By $\iota (a_j e_1 +b_j e_2 )=(a_j e_1 +b_j e_2 )$, 
we get $\iota (a_j )=b_j$ and 
$\iota (b_j ) \omega^s (\delta_0 )=a_j$. 
So $a_j \in E_j ^{\times}$ if and only if 
$b_j \in E_j ^{\times}$. 

Since $\bigl(a_j e_1 +\iota (a_j) e_2 \bigr)$ is 
$\Gal (F/K)$-invariant, 
$\sigma (a_j )=a_j$ and 
$g \in I(F/K' )$ acts on 
$a_j$ by $\chi (g)^{-1}$. 
We prove that there are 
$x_{j,1} ,x_{j,2} \in E_j$ 
such that 
\begin{itemize}
 \item 
 $a_j$ satisfies the above condition if and only if 
 $a_j =a_{j,1} x_{j,1} +a_{j,2} x_{j,2}$ 
 for some $a_{j,1} ,a_{j,2} \in E$, 
 \item 
 for $a_{j,1} ,a_{j,2} \in E$, we have 
 $a_{j,1} x_{j,1} +a_{j,2} x_{j,2} \in E_j ^{\times}$ 
 if and only if $a_{j,1} \neq 0$ and $a_{j,2} \neq 0$. 
\end{itemize} 
By Lemma \ref{isom}, we may replace 
$E_j$ by $F \otimes_K E$. 
Then $\sigma (a_j )=a_j$ if and only if 
$a_j \in K' _{\pi' ,2n_1 +1} \otimes_K E$. 
By Lemma \ref{basis}, we get the claim. 
We put 
$x_j (a_{j,1} ,a_{j,2}) =a_{j,1} x_{j,1} +a_{j,2} x_{j,2}$ 
and 
$x_j ^{\iota} (a_{j,1} ,a_{j,2}) 
 =\iota \bigl(x_j (a_{j,1} ,a_{j,2} )\bigr)$. 
Then we have 
\[
 \Fil_j ^{-k_{j,1}} D_F = 
 E_j \bigl(x_j (a_{j,1} ,a_{j,2}) e_1 + 
 x_j ^{\iota} (a_{j,1} ,a_{j,2}) e_2 \bigr) 
\]
for $(a_{j,1} ,a_{j,2}) \in \mathbb{P}^1 (E)$. 

The non-trivial 
$(\phi, N)$-stable 
$(F_0 \otimes _{\mathbb{Q}_p} E)$-submodules 
of $D$ are 
$D' _1 =(F_0 \otimes _{\mathbb{Q}_p} E)e_1$, 
$D' _2 =(F_0 \otimes _{\mathbb{Q}_p} E)e_2$ and 
$D' _{\mathfrak{L}} =(F_0 \otimes _{\mathbb{Q}_p} E) 
 (e_1 -\mathfrak{L} e_2 )$ 
for $\mathfrak{L} \in E^{\times}$. 
The condition 
$t_{\mathrm{H}} (D' _1 )\leq t_{\mathrm{N}} (D' _1 )$ 
is equivalent to that 
\[
 [K:K_0 ]\, v_p (\alpha_1 ) 
 \leq 
 \sum_{a_{j,1} a_{j,2} =0} \frac{k_{j,1} +k_{j,2} }{2} 
 +\sum_{a_{j,1} a_{j,2} \neq 0} k_{j,2} 
 +\sum_{k_{j,1} =k_{j,2} } k_{j,2} , 
\]
and this condition is automatically satisfied 
by the condition ($R$). 
The condition 
$t_{\mathrm{H}} (D' _2 )\leq t_{\mathrm{N}} (D' _2 )$ 
is also equivalent to the same condition. 
For $\mathfrak{L} \in E^{\times}$, 
the condition 
$t_{\mathrm{H}} (D' _{\mathfrak{L}} ) 
 \leq t_{\mathrm{N}} (D' _{\mathfrak{L}} )$ 
is equivalent to that 
\begin{align*}
 [K:K_0 ]&\, v_p (\alpha_1 ) 
 \leq 
 \sum_{a_{j,1} a_{j,2} =0} k_{j,2} + 
 \sum_{k_{j,1} =k_{j,2} } k_{j,2} \tag{$R_{\mathfrak{L}}$} \\ 
 &+\sum_{a_{j,1} a_{j,2} \neq 0} 
 \Bigl\{ t_j \bigl(\mathfrak{L} ,(a_{j,1} ,a_{j,2})\bigr) k_{j,1} 
 +\Bigl( 1-t_j \bigl(\mathfrak{L} ,(a_{j,1} ,a_{j,2}) \bigr) \Bigr)
 k_{j,2} \Bigr\}, 
\end{align*} 
where 
\[
 t_j \bigl(\mathfrak{L} ,(a_{j,1} ,a_{j,2}) \bigr)= 
 \frac{\Bigl| \bigl\{ j_F :F \hookrightarrow E \bigm| 
 j_F \textrm{-component of } 
 \frac{x_j ^{\iota} (a_{j,1} ,a_{j,2} )} 
 {x_j (a_{j,1} ,a_{j,2})} \in E_j 
 \textrm{ is } -\! \mathfrak{L} \bigr\} \Bigr| } 
 {[F:K]}. 
\]
As in the unramified case, 
we can prove that 
$t_j \bigl(\mathfrak{L} ,(a_{j,1} ,a_{j,2}) \bigr) \leq 1/2$, 
using the condition that $\chi \neq \chi^{\iota}$ 
on $I_{K'}$. 
So the condition $(R_{\mathfrak{L}} )$ 
is automatically satisfied by 
the condition $(R)$. 

\begin{prop} 
We assume 
$\tau \simeq \Ind_{W_{K'}} ^{W_K} (\chi) |_{I_K}$ 
for a ramified quadratic extension $K'$ of $K$ and 
a character $\chi$ of $W_{K'}$ such that
$\chi|_{I_{K'}}$ is finite and 
does not extend to $I_K$. 
We take a uniformizer $\pi'$ of $K'$ 
and a totally ramified abelian extension $L$ of $K'$ 
such that $\chi$ is trivial on $I_L$, 
and take positive integers $m_1$ and $n_1$ 
so that $L$ is contained in 
$K' _{2m_1} K' _{\pi' , {2n_1 +1}}$. 
We put $F=K' _{2m_1} K' _{\pi' , {2n_1 +1}}$. 
Then $N=0$ and 
$D=(F_0 \otimes_{\mathbb{Q}_p} E)e_1 \oplus 
 (F_0 \otimes_{\mathbb{Q}_p} E)e_2$ with  
\begin{align*} 
 \phi (e_{i,1} )&=\frac{1}{\alpha_1} e_{i+1,1} ,& 
 \phi (e_{i,2} )&=\frac{1}{\alpha_1} e_{i+1,2} ,& 
 &\textrm{if } i \equiv 0 & &\hspace*{-3em} \pmod{m_0} , \\ 
 \phi (e_{i,1} )&=e_{i+1,1} ,& 
 \phi (e_{i,2} )&=e_{i+1,2} ,& 
 &\textrm{if } i \not\equiv 0 & &\hspace*{-3em} \pmod{m_0}
\end{align*}
for $\alpha_1 \in E^{\times}$, 
\begin{align*} 
 \sigma e_1 &=e_1 ,& \iota e_1 &=e_2 ,& 
 ge_1 &=\bigl( 1 \otimes \chi (g) \bigr)e_1 ,\\ 
 \sigma e_2 &=(-1)^s e_2, & 
 \iota e_2 &=\bigl( 1\otimes \omega^s (\delta_0 )\bigr)e_1 ,& 
 ge_2 &=\bigl( 1 \otimes \chi^\sigma (g) \bigr)e_2 
\end{align*} 
for $s \in \mathbb{Z}$ and $g \in I(F/K' )$ and, 
for $j$ such that 
$k_{j,1} < k_{j,2}$,
\[
 \Fil_j ^{-k_{j,1}} D_F = 
 E_j \bigl(x_j (a_{j,1} ,a_{j,2}) e_1 + 
 x_j ^{\iota} (a_{j,1} ,a_{j,2}) e_2 \bigr) 
\] 
for $(a_{j,1} ,a_{j,2}) \in \mathbb{P}^1 (E)$ 
where 
\[
 2[K:K_0 ]\, v_p (\alpha_1 ) 
 = \sum_{j} (k_{j,1} +k_{j,2} ). 
\] 
Here $\omega:k' \to \mathcal{O}_{K'} ^{\times}$ 
is the Teichm\"{u}ller character, 
and the definitions of $\sigma, \iota, \delta_0$ are in the above discussion. 
\end{prop}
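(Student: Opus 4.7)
The plan is to assemble the proposition by gluing together the structural results already established in the preceding discussion for the ramified supercuspidal case; the main work is to verify that each normalization is consistent and that the admissibility conditions on all non-trivial $(\phi,N)$-stable submodules reduce to the single Hodge–Newton identity $(R)$.

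First, I would justify the choice of $F$: because $\chi|_{I_L}$ is trivial and $L \subset K'_{2m_1}K'_{\pi',2n_1+1}=F$, the restriction $\tau|_{I_F}$ is trivial, so $\rho$ is $F$-semi-stable and in fact crystalline over $F$; moreover $F$ is Galois over $K$ since $\iota_0(\pi')=-\pi'$ implies $K'_{\pi',2n_1+1}K'_{-\pi',2n_1+1}=K'_2 K'_{\pi',2n_1+1}$. Next, I would record the non-splitting of $1 \to \Gal(F/K') \to \Gal(F/K) \to \Gal(K'/K) \to 1$ (using $p\neq 2$ and the structure of the wild inertia) to obtain the distinguished lift $\iota$ with $\iota^2=\delta_0$ and the commutation relation $(\star)$. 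This places us in the setting where we can decompose $\chi|_{I_{K'}}=\omega^s\cdot\chi_1\cdot\chi_2$ with $\chi_2^2\neq 1$, which encodes that $\chi$ does not extend to $I_K$.

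Next, I would construct the basis $e_{i,1}, e_{i,2}$ of $D_i$ diagonalizing the action of ${k'}^{\times} \times U_{2n_1+1,+} \times U_{2n_1+1,-}$ by the prescribed characters while requiring $\iota e_{i,1}=e_{i,2}$; the action of $\iota$ on $e_{i,2}$ is then forced by $\iota^2=\delta_0$ to be $\omega^s(\delta_0)e_{i,1}$. Using that $\sigma$ intertwines $D_i$ with $D_{i+m_0}$ and commutes appropriately with $I(F/K)$ via $(\star)$, I may rescale to obtain $\sigma e_{i,1}=e_{i+m_0,1}$ and $\sigma e_{i,2}=(-1)^s e_{i+m_0,2}$. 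Since $\phi$ is $I(F/K)$-equivariant and $\sigma$-equivariant and maps $D_i$ into $D_{i+1}$, a Schur-type argument forces the two diagonal entries to be equal, giving a single sequence $\alpha_{i+1}\in E^{\times}$, and the $\sigma$-equivariance imposes $\alpha_i=\alpha_{i+m_0}$. A further rescaling that preserves the already-fixed $\iota$ and $\sigma$ relations absorbs all $\alpha_i$ with $2\leq i\leq m_0$ into $1$, leaving only $\alpha_1$ as a parameter. A direct computation of $\bigwedge^2 D$ then yields $(R)$.

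For the filtration, I would invoke Lemma \ref{inv} to pick a $\Gal(F/K)$-invariant generator $a_j e_1+b_j e_2$ of $\Fil_j^{-k_{j,1}}D_F$; the relation $\iota(a_j)=b_j$, $\iota(b_j)\omega^s(\delta_0)=a_j$, together with $\sigma(a_j)=a_j$, places $a_j$ in $K'_{\pi',2n_1+1}\otimes_K E$ transforming by $\chi^{-1}$ under $I(F/K')$. Lemma \ref{isom} and Lemma \ref{basis}, applied to this subextension, produce the pair $x_{j,1},x_{j,2}$ parametrizing $(a_{j,1},a_{j,2})\in\mathbb{P}^1(E)$. Finally, I would enumerate the non-trivial $(\phi,N)$-stable $(F_0\otimes_{\mathbb{Q}_p}E)$-submodules as $D'_1$, $D'_2$, and the lines $D'_{\mathfrak{L}}$, and observe that the inequalities $t_{\mathrm{H}}(D'_1)\leq t_{\mathrm{N}}(D'_1)$ and the analogous one for $D'_2$ follow formally from $(R)$, while $(R_{\mathfrak{L}})$ is forced by $(R)$ together with the bound $t_j(\mathfrak{L},(a_{j,1},a_{j,2}))\leq 1/2$.

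The main obstacle is exactly this last bound: one must show $t_j(\mathfrak{L},(a_{j,1},a_{j,2}))\leq 1/2$ for every $\mathfrak{L}\in E^{\times}$. I would mimic the argument carried out in the unramified case, using that $I(F/K')$ acts on $x_j^{\iota}(a_{j,1},a_{j,2})/x_j(a_{j,1},a_{j,2})$ by $\chi(\chi^{\iota})^{-1}$; if $t_j>1/2$, an averaging over the embeddings $j_F$ lying over a fixed $j_{F_0}$ forces $|\Ker(\chi(\chi^\iota)^{-1})|/[F:F_0]>1/2$, hence $\chi|_{I_{K'}}=\chi^{\iota}|_{I_{K'}}$, contradicting that $\chi$ does not extend to $I_K$. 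Once that bound is in place, admissibility reduces to $(R)$ alone and the proposition follows by packaging the enumerated data.
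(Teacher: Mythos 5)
Your proposal is correct and follows essentially the same route as the paper's own argument: the non-splitting of $(\diamondsuit)$ and the lift $\iota$ with $\iota^2=\delta_0$, the normalization of the bases via $(\star)$, the equality of the two diagonal entries of $\phi$ forced by commutation with $\iota\in I(F/K)$, the parametrization of the filtration via Lemmas \ref{isom}, \ref{inv} and \ref{basis}, and the reduction of all admissibility inequalities to $(R)$ through the bound $t_j\bigl(\mathfrak{L},(a_{j,1},a_{j,2})\bigr)\leq 1/2$ proved exactly as in the unramified case. No gaps.
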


\end{document}